\documentclass{amsart}                 

\hfuzz 2pt
\vfuzz 2pt

\usepackage{amssymb,amsmath,amsthm}
\usepackage{hyperref}
\usepackage{color}

\usepackage{ulem}
\renewcommand{\emph}{\normalem}

\theoremstyle{plain}
\newtheorem{theorem}{Theorem}[section]
\newtheorem*{theorem*}{Theorem}
\newtheorem{lemma}[theorem]{Lemma}
\newtheorem*{lemma*}{Lemma}

\newtheorem{proposition}[theorem]{Proposition}

\theoremstyle{remark}
\newtheorem{remark}[theorem]{Remark}
\newtheorem*{remark*}{Remark}

\theoremstyle{definition}
\newtheorem{definition}[theorem]{Definition}
\newtheorem*{definition*}{Definition}

\theoremstyle{notation}
\newtheorem{notation}[theorem]{Notation}
\newtheorem*{notation*}{Notation}

\newtheorem{Basic assumptions}[theorem]{Basic assumptions}

\numberwithin{equation}{section}

\newcount\quantno
\everydisplay{\quantno=0}\everycr{\quantno=0}
\newcommand\quant{\advance\quantno by1
                      \ifnum\quantno=1\qquad\else\quad\fi\forall }
\newcommand\itemno[1]{(\romannumeral #1)}

\renewcommand\Im{\operatorname{\mathrm{Im}}}

\newcommand\Dom{\mathrm{Dom}}

\newcommand\rest[1]{\kern-.1em
          \lower.5ex\hbox{$\scriptstyle #1$}\kern.05em}

\newcommand\set[1]{{\left\{#1\right\}}}

\renewcommand\mod[1]{\vert{#1}\vert}
\newcommand\bigmod[1]{\bigl\vert{#1}\bigr|}

\newcommand\norm[2]{{\Vert{#1}\Vert_{#2}}}

\newcommand\bignorm[2]{\left.{\bigl\Vert{#1}\bigr\Vert_{#2}}\right.}

\newcommand\opnorm[2]{|\!|\!| {#1} |\!|\!|_{#2}}
\newcommand\bigopnorm[2]{\bigl|\!\bigl|\!\bigl| {#1} 
\bigr|\!\bigr|\!\bigr|_{#2}}

\newcommand\wrt{\,\text{\rm d}}

\newcommand\bS{\mathbf{S}}

\newcommand\BC{\mathbb{C}}

\newcommand\BN{\mathbb{N}}
\newcommand\BR{\mathbb{R}}

\newcommand\BZ{\mathbb{Z}}

\newcommand\cB{\mathcal{B}}

\newcommand\cD{\mathcal{D}}

\newcommand\cI{\mathcal{I}}   
 
\newcommand\cJ{\mathcal{J}}
    
\newcommand\cL{\mathcal{L}}

\newcommand\cO{\mathcal{O}}

\newcommand\cR{\mathcal{R}}    
\newcommand\cS{\mathcal{S}}  \newcommand\fS{\mathfrak{S}}  
\newcommand\cT{\mathcal{T}}    
\newcommand\cU{\mathcal{U}}    
    
\newcommand\cW{\mathcal{W}}

\newcommand\al{\alpha}
\newcommand\be{\beta}
    
\newcommand\de{\delta}
  \newcommand\vep{\varepsilon}

\newcommand\la{\lambda}   
\newcommand\om{\omega}    \newcommand\Om{\Omega}  
\newcommand\si{\sigma}
\newcommand\te{\theta}

\newcommand\OV{\overline}
\newcommand\funnyk{k\hbox to 0pt{\hss\phantom{g}}}

\newcommand\lu[1]{L^1(#1)}
\newcommand\lp[1]{L^p(#1)}

\newcommand\ld[1]{L^2(#1)}

\newcommand\ldO[1]{L^2_0(#1)}

\newcommand\ly[1]{L^\infty(#1)}

\newcommand\hu[1]{H^1(#1)}
\newcommand\hufin[1]{H_{\mathrm{fin}}^1(#1)}

\newcommand\Xu[1]{X^1(#1)}

\newcommand\Xhfin[1]{X_{\mathrm{fin}}^k(#1)}

\newcommand\Xh[1]{X^k(#1)}
\newcommand\Xk{{X^k}}
\newcommand\Xhmu[1]{X^{k-1}(#1)}
\newcommand\Xhat[1]{X_{\mathrm{at}}^k(#1)}
\newcommand\Xkat{X_{\mathrm{at}}^k}

\newcommand\Yu[1]{Y^1(#1)}
\newcommand\Yh[1]{Y^k(#1)}
\newcommand\Yk{Y^k}

\newcommand\wh{\widehat}

\newcommand\whH{\widehat{\phantom{G}}\hbox to 0pt{\hss $H$}}

\newcommand\emspace{\hbox to 6pt{\hss}}
\newcommand\ds{\displaystyle}

\newcommand\rmi{\hbox{\rm (i)}}
\newcommand\rmii{\hbox{\rm (ii)}}
\newcommand\rmiii{\hbox{\rm (iii)}}

\newcommand\ir{\int_{-\infty}^{\infty}}

\newcommand\One{{\mathbf{1}}}

\newcommand\e{\mathrm{e}}

\newcommand\dest{\text{\rm d}}

\newcommand\Ric{\mathop{\rm Ric}}

\newcommand\Rbekappa{\cR_{4\be^2+\kappa^2}}
\newcommand\Jbe{\cU_{\be^2}}
\newcommand\Jbekappa{\cU_{4\be^2+\kappa^2}}

\newcommand\Jbeh{\cU_{\be^2}^k}
\newcommand\Jbemenoh{\cU_{\be^2}^{-k}}

\newcommand\HBh{Q_B^k}
\newcommand\HBhperp{(Q_B^k)^{\perp}}

\newcommand\sgn{\mathrm{sgn}}

\begin{document}

\title[Atomic decomposition]
{Atomic decomposition \\ of Hardy type spaces \\
on certain noncompact manifolds}

\subjclass[2000]{} 

\keywords{
atomic Hardy space, $BMO$, noncompact manifolds, isoperimetric 
property, Riesz transforms.}

\thanks{Work partially supported by PRIN 2007 ``Analisi Armonica".}

\author[G. Mauceri, S. Meda and M. Vallarino]
{Giancarlo Mauceri, Stefano Meda and Maria Vallarino}

\address{Giancarlo Mauceri: Dipartimento di Matematica\\ 
Universit\`a di Genova\\
via Dodecaneso 35\\ 16146 Genova\\ Italy 
-- mauceri@dima.unige.it}

\address{Stefano Meda: 
Dipartimento di Matematica e Applicazioni
\\ Universit\`a di Milano-Bicocca\\
via R.~Cozzi 53\\ I-20125 Milano\\ Italy
-- stefano.meda@unimib.it}

\address{Maria Vallarino:
Dipartimento di Matematica e Applicazioni
\\ Universit\`a di Milano-Bicocca\\
via R.~Cozzi 53\\ I-20125 Milano\\ Italy
 --  maria.vallarino@unimib.it}

\begin{abstract}
In this paper we consider a complete connected noncompact 
Riemannian manifold $M$ with bounded geometry and spectral gap.  
We prove that the Hardy type 
spaces $\Xh{M}$, introduced in a previous paper of the authors,
have an atomic characterization. An
 atom in $\Xh{M}$ is an atom in the Hardy space
$\hu{M}$ introduced by Carbonaro, Mauceri and Meda, satisfying
an ``infinite dimensional'' cancellation condition.
As an application, we prove that  the Riesz
transforms of even order $\nabla^{2k} \cL^{-k}$ 
map $\Xh{M}$ into $\lu{M}$.
\end{abstract}

\maketitle

\setcounter{section}{0}
\section{Introduction} \label{s:Introduction}

Suppose that $M$ is a complete connected noncompact 
Riemannian manifold with Ricci curvature bounded from below
and positive injectivity radius.
Denote by~$-\cL$ the Laplace--Beltrami operator on $M$: 
$\cL$ is a symmetric operator on $C_c^\infty(M)$ (the space of
compactly supported smooth complex-valued functions on $M$).  Its closure
is a self adjoint operator on $\ld{M}$ which, with a slight
abuse of notation, we still denote by $\cL$. 
We assume throughout that the bottom $b$ of the 
spectrum of $\cL$ is \emph{strictly positive}. 
Important examples of manifolds with these properties are nonamenable
connected unimodular Lie groups equipped with a left
invariant Riemannian distance, and symmetric spaces
of the noncompact type with the Killing metric. 
It is known \cite[Section~8]{CMM1}
that for manifolds with Ricci curvature bounded
from below the assumption $b>0$ is equivalent to 
an isoperimetric property,  
which implies that $M$ has exponential volume growth,
\emph{ergo} {the Riemannian measure} is nondoubling.

In \cite{MMV2} we introduced a sequence $\Xu{M}, X^2(M), \ldots$
of new spaces of Hardy type on $M$ with the property that
$$
\hu{M} \supset \Xu{M} \supset  X^2(M), \ldots,
$$
and the sequence $\Yu{M}, Y^2(M), \ldots$ 
of their dual spaces, and showed that these spaces may 
be used to obtain endpoint estimates for 
interesting spectral multipliers of $\cL$, including the
purely imaginary powers of $\cL$, and the first order Riesz transform.
Here $\hu{M}$ is the atomic Hardy space introduced in \cite{CMM1}.
Each of the inclusions above is proper and each of 
the spaces $\Xh{M}$ is an isometric copy of $\hu{M}$. 
We refer the reader to Section~\ref{s: Background material} for 
the definitions of the spaces $\hu{M}$, $\Xh{M}$ and $\Yh{M}$.  

Since $\Xh{M}$ is continuously included in $\hu{M}$, 
each function in $\Xh{M}$ admits an atomic
decomposition in terms of $H^1$-atoms (these are defined 
as classical Euclidean atoms {\cite{CW,St2}}, but their 
support is contained in balls of radius at most {$1$})).
Recall that an atom $a$ in $\hu{M}$ must have integral $0$.
This cancellation condition may
also be expressed by saying that $a$ is orthogonal to the subspace
of $\ld{M}$ of functions that are constant on 
the support of $a$.

E.M.~Stein posed the question whether functions in $\Xh{M}$
may be characterised as those functions in $\hu{M}$
that admit a decomposition in terms of atoms satisfying
further cancellation conditions. 
The purpose of this paper is to 
prove such an atomic characterisation of $\Xh{M}$ on manifolds
as above satisfying, in the case where {$k\geq 1$}, the additional 
requirement that the first {$k$} covariant derivatives of the 
Ricci tensor of $M$ be uniformly bounded.  

Specifically,
we say that $A$ is an  $X^k$-atom if $A$ is an $H^1$-atom supported in
a ball $B$ of radius at most $1$ and is orthogonal 
in $\ld{B}$ to the space $\HBh$ 
of all functions $V$ in $\ld{M}$ such that $\cL^k V$ is constant
on a neighbourhood of $\OV{B}$.
Note that, contrary to the classical case,
the cancellation condition required for $\Xh{M}$-atoms
is expressed as orthogonality to a 
\emph{infinite dimensional} subspace of $\ld{M}$.
{As far as we know, this is the first time that 
an ``infinite dimensional'' cancellation condition 
appears in the literature in connection with Hardy spaces.}

{An interesting and challenging problem is to prove
$\lp{M}$ bounds for the Riesz transforms for $p$ in $(1,\infty)$ 
and endpoint results for $p=1$.  After the pioneering
works of Stein \cite{St1} and R.S.~Strichartz \cite{Str}, 
several contributions
have appeared recently on the subject.  
We refer the reader to \cite{CD,ACDH} and
the references therein for $\lp{M}$ bounds.  
Endpoint results in the case where $\mu$
is doubling and $M$ satisfies 
some extra assumptions, such as 
appropriate on-diagonal estimate for the heat kernel 
or scaled Poincar\'e inequality~
have been obtained in \cite{CD,Ru,MRu,AMR}.

To the best of our knowledge, very little is known
about $\lp{M}$ bounds for higher order Riesz transforms. 
N.~Lohou\'e \cite{Lo} proved that if $M$ is a Cartan--Hadamard
manifold such that the first $2k$ covariant derivatives of the 
Riemann tensor of $M$ are uniformly bounded, and
the Laplace--Beltrami operator has spectral gap, then 
the Riesz transforms of even order $\nabla^{2k}\cL^{-k}$
are bounded on $\lp{M}$ for every $p$ in $(1,\infty)$.
}
The atomic characterization of the spaces
$\Xh{M}$ enables us to prove, in a more general setting,
an endpoint result for $\nabla^{2k}\cL^{-k}$ when $p=1$, 
namely that these operators are bounded from $\Xh{M}$ to $\lu{M}$
(see Theorem~\ref{t: RT}).  We then obtain the
$\lp{M}$ boundedness for $p$ in $(1,2)$ by interpolation with 
a classical $\ld{M}$ result of T.~Aubin~\cite{Au}.
We emphasise the fact that our proof is very short and simple. 

Now we briefly outline the content of this paper.
In Section~\ref{s: Background material}, after stating the basic 
geometric assumptions  on the manifold $M$ and 
their analytic consequences, we recall the definition 
of the spaces $\Xh{M}$ and their properties. In Section~\ref{s: 
Special atoms} we define the atoms in $\Xh{M}$, 
we prove some of their properties and we define the atomic 
space $\Xhat{M}$. In Section~\ref{s: The main result}, 
we prove that $\Xh{M}=\Xhat{M}$, with equivalent norms. The argument uses 
a technical lemma, whose proof is rather long and 
is deferred to {Section~\ref{s: proof of Lemma}}. In Section~\ref{s: Riesz} 
we prove the boundedness {results for} the 
Riesz transforms of even order. 

We will use the ``variable constant convention'', and denote by $C,$
possibly with sub- or superscripts, a constant that may vary from place to 
place and may depend on any factor quantified (implicitly or explicitly) 
before its occurrence, but not on factors quantified afterwards. 
If $\cT$ is a bounded linear operator from the Banach 
space $A$ to the Banach space $B$, we 
shall denote by $\bigopnorm{\cT}{A;B}$ its norm. 
If $A=B$ we shall simply write $\bigopnorm{\cT}{A}$ 
instead of  $\bigopnorm{\cT}{A;A}$.

\section{Basic definitions and background material}
\label{s: Background material}

Suppose that $M$ is a connected $n$-dimensional Riemannian manifold
of infinite volume with Riemannian measure $\mu$.
Denote by $\Ric$ the Ricci tensor, by $-\cL$ the Laplace--Beltrami operator
on $M$, 
by $b$ the bottom of the $\ld{M}$ spectrum of $\cL$,
and set $\be =
\limsup_{r\to\infty} \bigl[\log\mu\bigl(B(o,r)\bigr)\bigr]/(2r)$.
By a result of  {R.}~Brooks $b\leq \be^2$ \cite{Br}.

\begin{definition} \label{def: bounded geometry}
We say that $M$ has $C^\ell$ \emph{bounded geometry} 
if the injectivity radius is positive and the following hold:
\begin{itemize}
\item{} 
if $\ell =0$, then  the Ricci tensor is bounded from below;
\item{} 
if $\ell$ is positive, then the covariant
derivatives $\nabla^j \Ric$ of the Ricci tensor are uniformly
bounded on $M$ for all $j$ in $\{0,\ldots, \ell\}$.
\end{itemize}
\end{definition}

\begin{Basic assumptions} \label{Ba: on M}
We make the following assumptions on $M$:
\begin{enumerate}
\item[\itemno1] $b>0$;
\item[\itemno2]
$M$ has $C^\ell$ bounded geometry for some nonnegative integer $\ell$.
\end{enumerate}
\end{Basic assumptions}
We denote by~$\kappa$ the smallest positive number such that 
$\Ric \geq -\kappa^2$.
\begin{remark} \label{rem: unif ball size cond}
It is well known that for manifolds with properties \rmi-\rmii\ above
there are positive constants $\al$, $\be$ and $C$ such that
\begin{equation} \label{f: volume growth} 
\mu\bigl(B(p,r)\bigr)
\leq C \, r^{\al} \, \e^{2\be \, r}
\quant r \in [1,\infty) \quant p \in M,
\end{equation}
where $B(p,r)$ denotes the  
geodesic ball with centre $p$ and radius~$r$.
\par
Moreover, they
satisfy the \emph{uniform ball size condition}, i.e., for every $r>0$
\begin{equation}\label{f: ubsc}
\inf\, \bigl\{ \mu\bigl(B(p, r)\bigr): p \in M \bigr\} > 0
\qquad\hbox{and}\qquad
\sup\, \bigl\{ \mu\bigl(B(p, r)\bigr): p \in M \bigr\} < \infty.
\end{equation}
See, for instance, \cite{CMP}, where complete references are given.
\end{remark}
\begin{remark}\label{ultra}
By \cite[Section~7.5]{Gr} there exists 
a nonnegative number $\de$ such that the following 
ultracontractive estimate holds
{
$$
\bigopnorm{\e^{-t\cL }}{1;2}
\leq C\, \e^{-bt} \, t^{-n/4} \, (1+t)^{n/4-\de/2}  \quant t \in \BR^+ .
$$
Clearly this implies
$$
\bigopnorm{\e^{-t\cL }}{1;\infty}
\leq C\, \e^{-bt} \, t^{-\de}  \quant t \in [1,\infty).
$$
}
\end{remark}
\medskip
We denote by $\cB$ the family of all balls on $M$.
For each $B$ in $\cB$ we denote by $c_B$ and $r_B$
the centre and the radius of $B$ respectively.  
Furthermore, we denote by $c \, B$ the
ball with centre $c_B$ and radius $c \, r_B$.
For each \emph{scale parameter} $s$ in $\BR^+$, 
we denote by $\cB_s$ the family of all
balls $B$ in $\cB$ such that $r_B \leq s$.

We recall the definitions of the atomic Hardy space $H^1(M)$ and its
dual space $BMO(M)$ given in \cite{CMM1}.  

\begin{definition}
An $H^1$-\emph{atom} $a$
is a function in $\lu{M}$ supported in a ball $B$
with the following properties:
\begin{enumerate}
\item[\itemno1]
$\int_B a \wrt \mu  = 0$;
\item[\itemno2]
$\norm{a}{2}  \leq \mu (B)^{-1/2}$.
\end{enumerate}
\end{definition}

\begin{definition}
Suppose that $s$ is in $\BR^+$.  
The \emph{Hardy space} $H_s^{1}({M})$ is the 
space of all functions~$g$ in $\lu{M}$
that admit a decomposition of the form
\begin{equation} \label{f: decomposition}
g = \sum_{k=1}^\infty \la_k \, a_k,
\end{equation}
where $a_k$ is a $H^1$-atom \emph{supported in a ball $B$ of $\cB_s$},
and $\sum_{k=1}^\infty \mod{\la_k} < \infty$.
The norm $\norm{g}{H_s^{1}}$
of $g$ is the infimum of $\sum_{k=1}^\infty \mod{\la_k}$
over all decompositions (\ref{f: decomposition})
of $g$.  
\end{definition}

The vector space $H_s^{1}(M)$ 
is independent of~$s$ in $\BR^+$.
Furthermore, given $s_1$ and $s_2$ in $\BR^+$, 
the norms $\norm{\cdot}{H_{s_1}^{1}}$ and 
$\norm{\cdot}{H_{s_2}^{1}}$ are equivalent 
{\cite{CMM1}}.  

\begin{notation}
\emph{We shall denote the space $H_s^{1}(M)$ simply by $\hu{M}$,
and we endow $\hu{M}$ with the norm $H_{1}^{1}(M)$.}
\end{notation}
\begin{definition}
The space $BMO(M)$ is the
space of all locally integrable functions~$f$ such that $N(f) < \infty$,
where
$$
N(f)
=  \sup_{B\in \cB_{1}} \frac{1}{\mu(B)}
\int_B \mod{f-f_B} \wrt\mu,
$$
and $f_B$ denotes the average of $f$ over $B$. 
We endow $BMO(M)$ with the ``norm''
$$
\norm{f}{BMO}
= N(f).
$$
\end{definition}

\begin{remark}
It is straightforward to check that $f$ is in $BMO(M)$
if and only if its sharp maximal function $f^{\sharp}$,
defined by 
$$
f^{\sharp}(x)
= \sup_{B \in \cB_{1}(x)} \frac{1}{\mu(B)}
\int_B \mod{f-f_B} \wrt\mu
\quant x \in M,
$$
is in $\ly{M}$.  Here $\cB_{1}(x)$ denotes the family
of all balls in $\cB_{1}$ that contain the point $x$. 
\end{remark}
The Banach dual of $\hu{M}$ 
is isomorphic to  $BMO(M)$ \cite[Thm~5.1]{CMM1}.
\par
\medskip
Now we recall the definition of the new Hardy spaces $\Xh{M}$,
introduced in \cite{MMV2}.
For every $\si$ in $\BR^+$ denote by $\cU_\si$ the operator 
$\cL\, (\si I + \cL)^{-1}$. 
Observe that
$$
\cU_\si
= \cI-\si \, (\si I + \cL)^{-1}.
$$
It is known that $\cU_\si$ is injective on $\lu{M}+\ld{M}$.  

\begin{definition} \label{def: Hardy space}
For each positive integer $k$ 
we denote by $\Xh{M}$ the Banach space of all
$\lu{M}$ functions~$f$ such that $\cU_{\be^2}^{-k}f$ is in $\hu{M}$, 
endowed with the norm
$$
\norm{f}{\Xk}
= \norm{\cU_{\be^2}^{-k} f}{H^1}.
$$
\end{definition}

\noindent
Clearly $\cU_{\be^2}^{-k}$ is
an isometric isomorphism between $\Xh{M}$ and $\hu{M}$.

\begin{definition} \label{def: BMO space}
For each positive integer $k$
we denote by $\Yh{M}$ the Banach dual of $\Xh{M}$.
\end{definition}

\begin{remark}
Since $\cU_{\be^2}^{-k}$ is
an isometric isomorphism between $\Xh{M}$ and $\hu{M}$,
its adjoint map~$\bigl(\cU_{\be^2}^{-k}\bigr)^*$ is an isometric isomorphism 
between the dual of $\hu{M}$, i.e., $BMO(M)$, and $\Yh{M}$.
Hence
$$
\bignorm{\bigl(\cU_{\be^2}^{-k}\bigr)^*f}{\Yk}
= \norm{f}{BMO}.
$$
\end{remark}

\begin{remark} \label{rem: fund prop Xh}
We recall the following properties of the spaces $\Xh{M}$,
proved in \cite{MMV2}:
\begin{enumerate}
\item[\itemno1]
if $\si$ is in $(\be^2-b, \infty)$, then $\cU_\si\!\!^k \hu{M}$ 
agrees with $\Xh{M}$;      
\item[\itemno2]  $\cU_{\beta^2}: X^{k-1}(M)\to \Xh{M}$ is an isomorphism for every positive integer $k$;
\item[\itemno3]
$
\hu{M} \supset X^1(M) \supset X^2(M) \supset \cdots
$
with proper continuous inclusions;
{\item[\itemno4]
if $1/p = 1-\te/2$, then the complex interpolation space 
$(\Xh{M},\ld{M})_{[\te]}$ is $\lp{M}$ (this is the analogue for
the spaces $\Xh{M}$ of the celebrated result of C.~Fefferman
and Stein \cite{FeS}).}

\end{enumerate}
\end{remark}

\section{Special atoms}
\label{s: Special atoms}

Atoms in $\Xh{M}$ will be $\ld{M}$ functions supported in a ball $B$
that satisfy a size condition analogous 
to that for $H^1$-atoms and an infinite dimensional cancellation 
condition, which will be expressed as orthogonality
to the space of ``$k$-quasi-harmonic'' functions
on $\OV{B}$ defined below.

\begin{definition}
Suppose that $k$ is a positive integer, and that
$B$ is a ball in $M$. We say that a function $V$ in $\ld{M}$ 
is \emph{$k$-quasi-harmonic} on $\OV{B}$ if 
$\cL^k V$ is constant (in the sense of distributions) 
in a neighbourhood of $\OV{B}$. We shall denote by  $\HBh$ the space of  
$k$-quasi-harmonic functions
on $\OV{B}$.
\end{definition}

\begin{remark} \label{rem: elliptic reg}
Observe that the following are equivalent:
\begin{enumerate}
\item[\itemno1]
$V$ is in $\HBh$;
\item[\itemno2]
$V$ is in $\ld{M}$ and is smooth in a neighbourhood of $\OV{B}$,
and $\cL^k V$ is constant therein. 
\end{enumerate}

Indeed, if $V$ is in $\HBh$, then $V$ is in $\ld{M}$ by the definition
of the space $\HBh$,  and $\cL^k V$ is a constant in the sense
of distributions in a neighbourhood of $\OV{B}$.
Hence $V$ is smooth on that neighbourhood by
elliptic regularity.

The converse is obvious.
\end{remark}

\noindent
Observe the following inclusions, which are direct consequences
of the definition of~$\HBh$:
$$
Q_B^1 \subset Q_B^2 \subset \cdots;
\qquad
(Q_B^1)^\perp
\supset (Q_B^2)^\perp
\supset \cdots
$$
\par\medskip
For each ball $B$ in $M$ we denote by $\ldO{B}$ 
the subspace of $\ld{M}$ consisting 
of all $\ld{M}$ functions $f$ with support contained in the
ball $\overline{B}$, and satisfying $\int_B f \wrt \mu = 0$.
\begin{proposition}  \label{p: canc II}
Suppose that $k$ is a positive integer, and that $B$ is a ball in $M$.  
The following hold:
\begin{enumerate}
\item[\itemno1]
$
\HBhperp
= \set{F\in L^2(M):\cL^{-k} F\in\ldO{B}};
$
\item[\itemno2]
$\cL^{-k} \bigl( \HBhperp\bigr)$ is contained in $\ldO{B} \cap 
\Dom(\cL^{k})$.  Furthermore, functions in $\HBhperp$
have support contained in $\OV{B}$;
\item[\itemno3]
$\Jbemenoh \bigl( \HBhperp\bigr)$ is contained in $\ldO{B}$.
\end{enumerate}
\end{proposition}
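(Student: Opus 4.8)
The plan is to establish the three assertions in order, since (ii) and (iii) will use (i).

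\textbf{Step 1: Proof of \itemno1.}
The key observation is that $\cL$ is injective on $\lu{M}+\ld{M}$ (its bottom of spectrum $b$ being strictly positive, $\cL^{-k}$ is a bounded operator on $\ld{M}$), so $\cL^k$ and $\cL^{-k}$ are mutually inverse bijections of $\ld{M}$, and the orthogonal complement is computed via the self-adjointness of these operators. Precisely, $F \in \HBhperp$ iff $\prodo{F}{V}=0$ for every $V \in \HBh$. Write $F = \cL^k G$ with $G = \cL^{-k}F \in \ld{M}$; then $\prodo{F}{V} = \prodo{\cL^k G}{V} = \prodo{G}{\cL^k V}$ for $V$ smooth near $\OV B$ (using Remark~\ref{rem: elliptic reg} and the self-adjointness of $\cL^k$ on the relevant domain, justified by an integration-by-parts/regularity argument since $V$ is smooth near the compact set $\OV B$). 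Now as $V$ ranges over $\HBh$, the function $\cL^k V$ ranges over functions that are constant near $\OV B$; testing against the constant $1$ localized near $\OV B$ and against smooth compactly supported functions away from $\OV B$ shows that $\prodo{G}{\cL^k V}=0$ for all such $V$ iff $G$ is supported in $\OV B$ and $\int_B G \wrt\mu = 0$, i.e. $G \in \ldO{B}$. The one delicate point here is handling the ``neighbourhood'' quantifier and the fact that $\cL^k V$ need not be globally constant — but since $B$ is bounded and $\OV B$ compact, one can always find $V \in \HBh$ with $\cL^k V$ equal to any prescribed value on a fixed neighbourhood of $\OV B$ while lying in $\ld{M}$ globally (solve $\cL^k V = $ a smooth function that is $1$ near $\OV B$ and compactly supported, using that $\cL^{-k}$ maps $\ld{M}$ to $\ld{M}$).

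\textbf{Step 2: Proof of \itemno2.}
Let $F \in \HBhperp$ and put $G = \cL^{-k}F$. By (i), $G \in \ldO{B}$, and $G \in \Dom(\cL^k)$ since $G = \cL^{-k}F$ with $F \in \ld{M}$. Thus $\cL^{-k}(\HBhperp) \subset \ldO{B} \cap \Dom(\cL^k)$ immediately. For the second claim — that $F$ itself is supported in $\OV B$ — note $F = \cL^k G$ where $G$ is supported in $\OV B$; since $\cL^k$ is a local (differential) operator, $\cL^k G$ is supported in $\mathrm{supp}\, G \subset \OV B$. This is where $C^\ell$ bounded geometry enters implicitly (to make sense of $\cL^k G$ and its support via elliptic regularity on the open complement of $\OV B$, where $G$ vanishes, forcing $\cL^k G$ to vanish there too).

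\textbf{Step 3: Proof of \itemno3.}
Recall $\Jbemenoh = \cU_{\be^2}^{-k} = \bigl(\cL(\be^2 I+\cL)^{-1}\bigr)^{-k} = \cL^{-k}(\be^2 I + \cL)^k$. Let $F \in \HBhperp$. Then $\Jbemenoh F = \cL^{-k}(\be^2 I+\cL)^k F$. Expanding $(\be^2 I + \cL)^k = \sum_{j=0}^k \binom{k}{j}\be^{2(k-j)}\cL^j$ gives
$$
\Jbemenoh F = \sum_{j=0}^k \binom{k}{j}\be^{2(k-j)}\cL^{j-k}F.
$$
For $0 \le j \le k$, the operator $\cL^{j-k}$ is $\cL^{-(k-j)}$, a nonnegative power of $\cL^{-1}$ (bounded on $\ld{M}$). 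By the nested structure $\HBhperp = (Q_B^k)^\perp \subset (Q_B^j)^\perp$ recorded before the Proposition (equivalently, applying (i) at level $k-j$: $\cL^{-(k-j)}F \in \ldO{B}$ whenever $F \in (Q_B^k)^\perp$, which follows since $(Q_B^k)^\perp \subset (Q_B^{k-j})^\perp$ and then (i) at level $k-j$ applies to $F$), each summand $\cL^{j-k}F$ lies in $\ldO{B}$. Since $\ldO{B}$ is a linear subspace, the finite linear combination $\Jbemenoh F$ lies in $\ldO{B}$ as well.

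\textbf{Main obstacle.} The technical heart is Step~1, specifically the rigorous justification of the integration by parts $\prodo{\cL^k G}{V} = \prodo{G}{\cL^k V}$ and the surjectivity claim (that $\cL^k V$ can realize an arbitrary prescribed germ near $\OV B$ while keeping $V \in \ld{M}$). Both are handled by exploiting that $\OV B$ is compact and $b > 0$, so that $\cL^{-k}$ is globally bounded on $\ld{M}$; smoothness near $\OV B$ comes from elliptic regularity (Remark~\ref{rem: elliptic reg}) together with the $C^\ell$ bounded geometry hypothesis ensuring uniform local elliptic estimates. Once (i) is in hand, (ii) and (iii) are short.
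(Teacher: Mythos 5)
For parts (i) and (ii) your argument is essentially the paper's: characterize $\HBhperp$ by moving the operator across the inner product and testing against functions $V = \cL^{-k}W$ with $W$ constant near $\OV B$ or supported away from it. One caveat on the forward inclusion of (i): the intermediate identity $\prodo{\cL^k G}{V} = \prodo{G}{\cL^k V}$ needs more care than you give it, since a general $V \in \HBh$ need not lie in $\Dom(\cL^k)$, and the support of $G = \cL^{-k}F$ is precisely what you are trying to determine, so the pairing against $\cL^k V$ cannot yet be localized near $\OV B$. The paper sidesteps this entirely by computing with the \emph{bounded} self-adjoint operator $\cL^{-k}$, writing $(\cL^{-k}F, \One_{B'}) = (F, \cL^{-k}\One_{B'})$ with $\cL^{-k}\One_{B'} \in \HBh$; restricting from the outset to test functions $V = \cL^{-k}W$ (which do lie in $\Dom(\cL^k)$), as your final parenthetical already suggests, closes the gap.

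Your proof of (iii) takes a genuinely different route. You expand $\Jbemenoh = \cL^{-k}(\be^2\cI + \cL)^k$ (the paper's $(\cI+\be^2\cL)^k$ is a typo for $(\be^2\cI+\cL)^k$) as $\sum_{j=0}^k \binom{k}{j}\be^{2(k-j)}\,\cL^{-(k-j)}F$, then invoke the nesting $(Q_B^k)^\perp \subset (Q_B^{k-j})^\perp$ and part (i) at each level $k-j$ to put every summand in $\ldO{B}$. The paper instead proves directly that $\cL^j$ maps $\ldO{B}\cap\Dom(\cL^k)$ into itself for $0\le j\le k$: support propagation by elliptic regularity, and $\int_M \cL^j F \wrt\mu = (\cL^j F,\phi) = (F,\cL^j\phi) = 0$ with $\phi\in C_c^\infty(M)$ equal to $1$ on a neighbourhood of $\OV B$. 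Your version recycles (i) and avoids the cut-off computation; the paper's is more self-contained. Both are correct. (Minor aside: bounded geometry plays no role in this proposition, only local elliptic regularity, so your comment about $C^\ell$ bounded geometry entering in (ii) is not needed.)
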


\begin{proof}
We prove \rmi.  First we show that $\HBhperp$
is contained in \break $\set{F\in L^2(M): \cL^{-k} F\in\ldO{B}}$.

Suppose that $F$ is in $\HBhperp$.  To show that the
support of $\cL^{-k}F$ is contained in $\OV{B}$
it suffices to prove that $(\cL^{-k}F, \One_{B'}) = 0$
for every ball $B'$ contained in $(\OV{B})^c$.  
Since $\cL$ is self adjoint,
$$
(\cL^{-k} F, \One_{B'})
= (F,\cL^{-k} \One_{B'}).
$$
Notice that $\cL^{-k} \One_{B'}$ is in $\HBh$, hence the last
inner product vanishes, as required.

Next we prove that the integral of $\cL^{-k}F$ is $0$.
Since the support of $\cL^{-k}F$ is contained in~$\OV{B}$
and $\cL$ is self adjoint,
$$
\int_M \cL^{-k}F \wrt \mu
= (\cL^{-k}F, \One_{2B})
= (F, \cL^{-k}\One_{2B}).
$$
Now, the last inner product vanishes, because $F$ is in $\HBhperp$ by assumption
and $\cL^{-k}\One_{2B}$ is in $\HBh$, as required. 

Next we prove that $\set{F\in L^2(M):\cL^{-k} F\in\ldO{B}}$
is contained in $\HBhperp$.
Suppose that  $\cL^{-k} F$ is in $\ldO{B}$.
Observe that $F$ is in $\Dom(\cL^k)$ and that
$F =  \cL^k \cL^{-k} F$.  Suppose now that $V$ is in $\HBh$.
Then $V$ is smooth in a neighbourhood of $\OV{B}$ by 
Remark~\ref{rem: elliptic reg}, and 
$$
\begin{aligned}
(F,V) 
= (\cL^k \cL^{-k} F, V)
= (\cL^{-k} F, \cL^k V) 
= 0.
\end{aligned}
$$
The last equality follows from the facts that $\cL^k V$ is constant
in a neighbourhood of $\OV{B}$, 
and that $\cL^{-k}F$ is in $\ldO{B}$, so that its integral on $B$ vanishes. 

Next we prove \rmii.
Clearly if $F$ is in $\ld{M}$, then $\cL^{-k}F$ is in $\Dom(\cL^k)$ 
by abstract set theory.  Moreover $\cL^{-k}F$ is in $\ldO{B}$ by \rmi,
and the first statement of \rmii\ follows.

To prove the second statement of \rmii, observe that the support of 
$\cL^{-k}F$ is contained in $\OV{B}$,
hence so is the support of $\cL^k\cL^{-k}F$, i.e., of $F$.

Finally, we prove \rmiii.  Observe that $\Jbemenoh = (\cI+\be^2\, \cL)^k \,
\cL^{-k}$.  Since  $\cL^{-k} \bigl( \HBhperp\bigr)$ 
is contained in $\ldO{B} \cap \Dom(\cL^{k})$ by \rmii, it suffices to show
that $\cL^j\bigl(\ldO{B} \cap \Dom(\cL^{k})\bigr)$ is contained in
$\ldO{B}$ for all $j$ in $\{0,1,\ldots, k\}$.  Suppose that $F$
is in $\ldO{B} \cap \Dom(\cL^{k})$.  Denote by $\phi$ a function
in $C_c^\infty(M)$ such that $\phi=1$ on $\OV{B}$.  Since $\cL$ is self adjoint
and the support of $F$ is contained in $\OV{B}$, 
$$
\int_M \cL^j F \wrt \mu
= (\cL^j F, \phi)
= (F, \cL^j \phi)
= 0,
$$
as required to conclude the proof of \rmiii\ and of the proposition. 
\end{proof}

\begin{definition}
Suppose that $k$ is a positive integer. 
An $X^k$-\emph{atom} associated to
the ball $B$ is a function $A$ in $\ld{M}$, supported in $B$, such that
\begin{enumerate}
\item[\itemno1]
$A $ is in $\HBhperp$;
\item[\itemno2]
$\ds\norm{A}{2}\leq \mu(B)^{-1/2}$.
\end{enumerate}
Note that condition \rmi\ implies that 
$\int_M A\wrt\mu=0$, because $\One_{2B}$ is in $\HBh$.
\end{definition}

\begin{remark} \label{rem: 12h atomi}
Note that if $A$ is a $X^k$-atom supported in $B$, then 
$\cL^{-k}A/\opnorm{\cL^{-k}}{2}$ is 
a $H^1$-atom with support contained in $\OV{B}$.

Indeed $A$ is in $\HBhperp$, so that $\cL^{-k}A$ is in $\ldO{B}$ by
Proposition~\ref{p: canc II}~\rmiii.  Moreover 
$$
\begin{aligned}
\norm{\cL^{-k}A}{2}
& \leq \opnorm{\cL^{-k}}{2} \, \norm{A}{2} \\
& \leq \opnorm{\cL^{-k}}{2} \, \mu(B)^{-1/2},
\end{aligned}
$$
so that $\cL^{-k}A/\opnorm{\cL^{-k}}{2}$ is a $H^1$-atom supported in $B$,
as required.   

Note also that an $X^k$-atom $A$ is in $\Xh{M}$ and
\begin{equation} \label{f: norma atomica}
\norm{A}{\Xk} 
\leq \opnorm{\Jbemenoh}{2}.
\end{equation}
Indeed, the function $\Jbemenoh A$ is in $\ldO{B}$
by Proposition~\ref{p: canc II}~\rmiii\ and 
$$
\begin{aligned}
\norm{\Jbemenoh A}{2}
& \leq \opnorm{\Jbemenoh}{2} \, \norm{A}{2} \\
& \leq \opnorm{\Jbemenoh}{2} \, \mu(B)^{-1/2}.
\end{aligned}
$$
Therefore $\Jbemenoh A/\opnorm{\Jbemenoh}{2}$ is an $H^1$-atom, and the required
estimate follows from the definition of $\Xh{M}$.
\end{remark}

\begin{definition}
Suppose that $k$ is a positive integer.  The space $\Xhat{M}$
is the space of all functions $F$ in $\hu{M}$ that admit a 
decomposition of the form $F= \sum_j \la_j\, A_j$, where $\{\la_j\}$ is 
a sequence in $\ell^1$ and $\{A_j\}$ is a sequence 
of $X^k$-atoms supported in balls $B_j$ in $\cB_1$.  Atoms supported
in balls in $\cB_1$ will be called \emph{admissible}.  We endow 
$\Xhat{M}$ with the norm 
$$
\norm{F}{\Xkat}
= \inf\, \bigl\{\sum_{j} \mod{\la_j}:  F = \sum_{j} \la_j \, A_j,\quad
\hbox{$A_j$ admissible $X^k$-atoms}\bigr\}.
$$
\end{definition}

\section{
The atomic decomposition of $\Xh{M}$.}
\label{s: The main result}
\
In this section we prove that $\Xh{M}=\Xhat{M}$ with equivalent norms. 
We need two lemmata.

\begin{lemma}\label{l: Usk}
If $\sigma>\beta^2-b$ the operator $\cU_\sigma^k$ is bounded on $H^1(M)$
for every {positive} integer~$k$.
\end{lemma}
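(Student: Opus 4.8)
The statement is that $\cU_\sigma^k = \bigl(\cL(\sigma I+\cL)^{-1}\bigr)^k$ is bounded on $\hu{M}$ when $\sigma > \be^2 - b$. Since $\hu{M}$ has an atomic decomposition, it suffices to show that there is a constant $C$ (independent of the atom) such that $\norm{\cU_\sigma^k a}{H^1} \leq C$ for every $H^1$-atom $a$ supported in a ball $B$ of $\cB_1$. Then boundedness on all of $\hu{M}$ follows by summing over an atomic decomposition, using that $\hu{M}$ is complete and that the $H^1_1$-norm controls the atomic sum. So the whole problem reduces to a single uniform estimate on atoms.

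**Reducing to a resolvent estimate.** Write $\cU_\sigma = \cI - \sigma(\sigma I + \cL)^{-1}$, so that $\cU_\sigma^k$ expands as a finite linear combination (binomial) of the operators $\sigma^j (\sigma I+\cL)^{-j}$, $j = 0, 1, \dots, k$. Hence it is enough to prove that each resolvent power $(\sigma I + \cL)^{-j}$, and in particular the single resolvent $(\sigma I+\cL)^{-1}$, maps $H^1$-atoms into $\hu{M}$ with uniformly bounded norm; equivalently, that $(\sigma I+\cL)^{-1}$ is bounded on $\hu{M}$ for $\sigma > \be^2 - b$, and then iterate $k$ times. Note $\sigma > \be^2 - b$ is exactly the condition that $\sigma + b > \be^2$, i.e. the bottom $\sigma + b$ of the spectrum of $\sigma I + \cL$ lies strictly above $\be^2$; this is what will make the relevant heat-type integrals converge, via the ultracontractive bound in Remark~\ref{ultra} (the $\e^{-(\sigma+b)t}$ decay beats the $\e^{2\be r}$ volume growth in \eqref{f: volume growth}).

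**The atomic estimate.** Given an $H^1$-atom $a$ on a ball $B \in \cB_1$, use the subordination formula $(\sigma I + \cL)^{-1} = \int_0^\infty \e^{-\sigma t}\, \e^{-t\cL}\wrt t$ and split the integral at a fixed time, say $t = 1$: the local part $\int_0^1 \e^{-\sigma t}\e^{-t\cL}a \wrt t$ and the global part $\int_1^\infty \e^{-\sigma t}\e^{-t\cL}a\wrt t$. For the local part one expects $\e^{-t\cL}a$ to behave like a (fixed multiple of a) molecule concentrated near $B$, and a standard argument — decompose the space into annuli $2^{j}B \setminus 2^{j-1}B$, use Gaussian-type off-diagonal heat kernel bounds together with the cancellation $\int a = 0$ to gain a factor in $t$ and in $2^{-j}$ — produces a uniformly bounded $\hu{M}$-norm; here the radii stay comparable to $1$, so finitely many scales and the local doubling from \eqref{f: ubsc} suffice. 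For the global part, the cancellation $\int a = 0$ is again crucial: writing $\e^{-t\cL}a$ and using the spectral gap, $\norm{\e^{-t\cL}a}{1}$ decays like $\e^{-(\sigma+b) t}$ times a polynomial (via Remark~\ref{ultra} and \eqref{f: volume growth}, after controlling the $L^1$-mass of the heat evolution of a mean-zero $L^2$-function supported in a unit ball), so $\int_1^\infty \e^{-\sigma t}\norm{\e^{-t\cL}a}{1}\wrt t < \infty$ uniformly, and this tail can simply be estimated in $\lu{M}$ — but to land in $\hu{M}$ rather than merely $\lu{M}$ one either keeps track of a crude atomic decomposition of $\e^{-t\cL}a$ on balls of radius $\sim t$ (whose $H^1_t$-norm is uniform, then pass to $H^1_1$ by the equivalence of norms in \cite{CMM1}) or invokes a known Davies–Gaffney/heat-semigroup characterization. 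Iterating the resulting bound $k$ times and recombining the binomial sum gives the lemma.

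**Main obstacle.** The delicate point is the global part: one must show the heat evolution of a unit-scale mean-zero atom stays in $\hu{M}$ (not just $\lu{M}$) with $\hu{M}$-norm integrable against $\e^{-\sigma t}$. This is where both hypotheses enter essentially — the spectral gap $b > 0$ (to get $\sigma + b$ above $\be^2$ and hence exponential-in-$t$ decay of the $L^1$-mass despite exponential volume growth) and the bounded geometry with the uniform ball size condition \eqref{f: ubsc} (to run the annular decomposition and produce atoms at scale $\sim t$ with controlled count and size). I expect the cleanest route is to prove directly that $\e^{-t\cL}a$ admits an atomic decomposition with $\norm{\e^{-t\cL}a}{H^1_{c\sqrt{t}}} \leq C\,\e^{-(\sigma+b) t/2}\,(1+t)^{N}$ for suitable $N$, then use the $s$-independence of $H^1_s$ with comparable norms; assembling the constants carefully so that nothing depends on the particular atom is the part requiring genuine care.
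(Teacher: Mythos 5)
Your route is genuinely different from the paper's, and it is also not a proof yet. The paper's argument is a two--line application of a spectral multiplier theorem: writing $\cD=\sqrt{\cL-b}$, one has $\cU_\sigma^k = m_{\sigma,k}(\cD)$ with
$$
m_{\sigma,k}(\zeta)=\Bigl(\frac{\zeta^2+b}{\zeta^2+b+\sigma}\Bigr)^k ,
$$
and the poles of $m_{\sigma,k}$ sit at $\zeta=\pm i\sqrt{b+\sigma}$, hence \emph{outside} the strip $\bS_\beta=\{\zeta:\mod{\Im\zeta}<\beta\}$ precisely when $\sigma>\beta^2-b$. In that case $m_{\sigma,k}$ is bounded, even, holomorphic on $\bS_\beta$ and satisfies Mihlin bounds $\mod{D^jm_{\sigma,k}(\zeta)}\leq C(1+\mod{\zeta})^{-j}$, so \cite[Thm 3.4]{MMV2} gives $H^1$-boundedness immediately. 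That theorem already packages all the subordination, annular, and long-time decay estimates you are trying to reconstruct by hand, so the conceptual step you miss is simply to read the hypothesis $\sigma>\beta^2-b$ as a strip-holomorphy condition for the multiplier.

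Your heat-semigroup subordination plan would, if carried out, essentially be re-proving a special case of that multiplier theorem, and in the place where all the real work lives -- the global part -- your sketch has concrete gaps. The claim that $\norm{\e^{-t\cL}a}{1}$ decays like $\e^{-(\sigma+b)t}$ conflates the damping factor $\e^{-\sigma t}$ from the subordination integral with decay of the semigroup itself: the $L^2$ decay rate of $\e^{-t\cL}$ is $\e^{-bt}$, and on an exponentially growing manifold the $L^1$-norm of $\e^{-t\cL}a$ gains a competing volume factor, so the decay is substantially weaker than you assert. Similarly, the step ``estimate $\e^{-t\cL}a$ in $H^1_{c\sqrt t}$ and then pass to $H^1_1$ by equivalence of norms'' hides an $s$-dependent equivalence constant: the $H^1_{s_1}$ and $H^1_{s_2}$ norms are equivalent, but the constant grows with the ratio of scales (typically exponentially in $s$ here), and this growth has to be beaten by the $t$-decay before the integral over $t\in[1,\infty)$ closes. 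These are exactly the quantitative bookkeeping points you flag as ``requiring genuine care,'' which means the proposal does not yet close the argument; the paper avoids all of it by going through the multiplier theorem.
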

\begin{proof} Denote by $\cD$ the operator $\sqrt{\cL-b}$ and by $m_{\sigma,k}$ the function
defined by
$$
m_{\sigma,k}(\zeta)=\left(\frac{\zeta^2+b}{\zeta^2+b+\sigma}\right)^k.
$$ 
Then $\cU_\sigma^k=
m_{\sigma,k}(\cD)$. 
The function $m_{\sigma,k}$ is bounded, even and holomorphic 
in the strip $\bS_{\beta}=\set{\zeta\in\BC: \mod{\Im \zeta}<\beta}$ and there 
exists a constant $C$ such that
\begin{equation}
\mod{D^j m_{\sigma,k}(\zeta)} 
\leq {C} \, {(1+\mod{\zeta})^{-j}}
\quant \zeta\in \bS_{\beta} \quant j \in \BN.
\end{equation}
The conclusion follows from \cite[Thm 3.4]{MMV2}.
\end{proof}

\noindent
The main step in the proof of the atomic decomposition of $\Xh{M}$ 
is Lemma~\ref{l: dec resolvent} below, 
which will be proved in Section~\ref{s: proof of Lemma}.
\begin{lemma} \label{l: dec resolvent}
Suppose that $k$ is a positive integer and that 
$M$ has $C^k$ bounded geometry. If
$A$ is an admissible $X^{k-1}$-atom  then $\Jbekappa A$ is 
in $\Xhat{M}$, and 
there exists a constant $C$, independent of $A$, such that
$$
\norm{\Jbekappa A}{\Xkat}
\leq C ,
$$
{where $\kappa$ is the constant which appears 
in the lower bound of the Ricci tensor.}
\end{lemma}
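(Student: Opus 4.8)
The goal is to take an admissible $X^{k-1}$-atom $A$, supported in a ball $B\in\cB_1$, and show that $\Jbekappa A=\cU_{4\be^2+\kappa^2}^k A$ can be written as an $\ell^1$-combination of admissible $X^k$-atoms with control on the coefficients. The natural first move is to expand $\cU_{4\be^2+\kappa^2}^k$ in terms of the resolvent: since $\cU_\si=\cI-\si(\si I+\cL)^{-1}$, writing $\si=4\be^2+\kappa^2$ gives $\cU_\si^k=\sum_{j=0}^k\binom{k}{j}(-\si)^j(\si I+\cL)^{-j}$. The term $j=0$ is just $A$ itself, which is already an admissible $X^{k-1}$-atom, hence (via the inclusion $\HBhperp[k-1]\subset(\HBh)^\perp$, i.e.\ $Q_B^{k-1}\subset Q_B^k$) an admissible $X^k$-atom up to a harmless constant; so the real work is the terms with $1\le j\le k$, i.e.\ controlling $(\si I+\cL)^{-j}A$ for such $j$.

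The heart of the matter is a \emph{local} analysis of $u_j:=(\si I+\cL)^{-j}A$. First I would record the pointwise/weighted decay of $u_j$ away from $B$: because $\si>b\ge 0$, the operator $(\si I+\cL)^{-1}$ has a kernel with exponential (Agmon-type) decay — quantitatively, one uses the ultracontractive estimate of Remark~\ref{ultra} together with finite propagation speed / Gaussian-type heat kernel bounds implied by $C^0$ bounded geometry to get $\norm{u_j}{L^2(M\setminus 2^{m+1}B\,\cup\,2^mB)}\le C\,\e^{-c\,2^m}\,\mu(B)^{-1/2}$ (suitably interpreted using the uniform ball size condition, since $r_B\le 1$). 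Then I would decompose $u_j=\sum_{m\ge 0}u_j\,\phi_m$ using a dyadic partition of unity adapted to the annuli $2^mB$, so that the piece $u_j\phi_m$ is supported in $2^{m+1}B$ with $L^2$-norm at most $C\e^{-c2^m}\mu(B)^{-1/2}$. Each such piece, after subtracting its mean over $2^{m+1}B$ and rescaling, becomes an $H^1$-atom with $\ell^1$-summable coefficient $C\e^{-c2^m}(\mu(2^{m+1}B)/\mu(B))^{1/2}$ — and by the volume bound \eqref{f: volume growth} this ratio grows only like $2^{m\alpha}\e^{2\be2^m}$, which is swamped provided the decay constant $c$ beats $2\be$. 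Obtaining a decay rate strictly larger than $2\be$ is exactly where the choice $\si=4\be^2+\kappa^2$ matters: the Agmon distance / spectral-gap bound gives decay governed by $\sqrt{\si-b}\ge\sqrt{4\be^2+\kappa^2-\be^2}=\sqrt{3\be^2+\kappa^2}$... so one must be slightly careful and instead iterate: for $(\si I+\cL)^{-1}$ one gets decay like $\e^{-\sqrt{\si-b}\,d(x,B)}$, and since $\sqrt{4\be^2+\kappa^2-b}>2\be$ (as $b\le\be^2$), the rate genuinely exceeds $2\be$, as needed. (The role of $\kappa^2$ and of $C^k$ bounded geometry is to handle the higher covariant-derivative bookkeeping needed below.)

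However, the pieces $u_j\phi_m$ with their means subtracted are only $H^1$-atoms, not $X^k$-atoms: they do not lie in $\HBhperp$ for the relevant (larger) ball. To upgrade them I would use Proposition~\ref{p: canc II}: a function supported in $\OV{B'}$ lies in $(Q_{B'}^k)^\perp$ iff $\cL^{-k}$ of it lies in $\ldO{B'}$, i.e.\ iff it has vanishing moments against all $V$ with $\cL^kV$ locally constant. The clean way is to not subtract a mere constant but rather to subtract a correction lying in the finite-dimensional-modulo-$\ld{}$ span that restores the $k$-quasi-harmonic cancellation, i.e.\ project $u_j\phi_m$ onto $(Q_{2^{m+1}B}^k)^\perp$; the error term is an explicit $k$-quasi-harmonic function which, using $C^k$ bounded geometry and elliptic regularity (Remark~\ref{rem: elliptic reg}) together with the fact that $A$ itself is already an $X^{k-1}$-atom, can be reabsorbed into the $j=0$ channel or shown to telescope. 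A cleaner alternative, and the one I expect to actually carry out, is to exploit the algebraic identity $\cU_\si^k\cL^{-(k-1)} = \cL\,(\si I+\cL)^{-1}\cdot\bigl[\cL\,(\si I+\cL)^{-1}\bigr]^{k-1}\cL^{-(k-1)}\cdot\cL^{-1}\cdots$, more usefully: since $A\in(Q_B^{k-1})^\perp$ we have $\cL^{-(k-1)}A\in\ldO{B}$, and then $\cL^{-k}(\Jbekappa A)=\cU_\si^k\,\cL^{-1}(\cL^{-(k-1)}A)$; one shows the right side lies (after the annular decomposition) in $\ldO{2^{m+1}B}$ for each piece, which is precisely condition \itemno1 of an $X^k$-atom by Proposition~\ref{p: canc II}~\rmi. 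The decay and size bounds from the previous paragraph then give the $\ell^1$ control, completing the proof.

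\textbf{Main obstacle.} The technical crux is the quantitative exponential-decay estimate for $(\si I+\cL)^{-j}A$ in $L^2$ of far annuli with a rate strictly exceeding $2\be$, \emph{uniform} over all admissible balls $B\in\cB_1$ — this is what forces the specific constant $4\be^2+\kappa^2$ and the $C^k$ bounded-geometry hypothesis, and it is why the lemma's proof is deferred to its own section. Everything else (partition of unity, rescaling to atoms, summing a geometric series against the volume growth \eqref{f: volume growth}) is routine once that estimate is in hand.
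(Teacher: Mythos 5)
Your reading of the statement is off in a way that unravels the whole plan. The macro $\Jbekappa$ abbreviates $\cU_{4\be^2+\kappa^2}$, a \emph{single} power of $\cU_\si$, not $\cU_\si^k$; in Theorem~\ref{t: atomic dec} the lemma is invoked inductively as the single step $\cU\colon X^{k-1}\to X^k$. Thus there is no binomial expansion $\sum_j\binom{k}{j}(-\si)^j(\si I+\cL)^{-j}$ to organize; the object to analyze is simply $\cU_\si A=\cL(\si I+\cL)^{-1}A$. A more serious and independent error: you assert that because $Q_B^{k-1}\subset Q_B^k$, an admissible $X^{k-1}$-atom is already an admissible $X^k$-atom. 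The inclusion of the subspaces gives $(Q_B^{k-1})^\perp\supset(Q_B^k)^\perp$, i.e.\ the orthogonal complements nest the \emph{other} way, so $A\in(Q_B^{k-1})^\perp$ gives no information about membership in $(Q_B^k)^\perp$. Even the $j=0$ term of your expansion therefore needs a genuine argument, and this defeats the ``harmless'' dismissal on which your plan leans.

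Beyond those two points, the mechanism you propose is not the one the paper uses and, as sketched, does not clearly close. You cut $(\si I+\cL)^{-j}A$ spatially with a dyadic partition of unity and then subtract means, which destroys the $k$-quasi-harmonic cancellation; your ``upgrade'' to $X^k$-atoms via Proposition~\ref{p: canc II} is gestured at but never made precise, and it is exactly the hard part. The paper instead subordinates $\cR=(\si I+\cL)^{-1}=r(\cD_1)$ to the wave propagator $\cD_1=\sqrt{\cL-b+\kappa^2}$ and performs the dyadic decomposition in the \emph{time} variable ($\om^{\rho_0}$, $\phi^{\rho_i}$, $\om_j$), producing operators $S_i(\cD_1)$ and $T_j(\cD_1)$ whose outputs automatically have vanishing integrals by Proposition~\ref{p: Mean} and controlled supports by finite propagation speed. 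The needed decay beating $2\be$ comes from $\wh r(t)=c^{-1}\e^{-c|t|}$ with $c=\sqrt{4\be^2+b}>2\be$, not from an Agmon estimate for the resolvent on $M$ (which you would still have to prove uniformly). Finally, the crucial tool you did not identify is the ``economical decomposition'' Lemma~\ref{l: economical decomposition I}, which converts $\cL^k a$ for $a$ an $H^1$-atom in $\Dom(\cL^k)$ into an $\ell^1$-combination of $X^k$-atoms with a factor $(1+r_B)\mu(B)^{1/2}$; it is what lets the paper absorb both the dyadic annular pieces (Step~II) and the exponentially decaying tail pieces (Step~III), and without some substitute for it your proposal does not reach the atomic estimate.
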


\noindent
The main result of this section is the following.

\begin{theorem} \label{t: atomic dec}
Suppose that $k$ is a positive integer and that 
$M$ has $C^k$ bounded geometry (see Definition~\ref{def: bounded
geometry}).
Then $\Xh{M}$ and $\Xhat{M}$
agree as vector spaces and there exists a constant $C$ such that
\begin{equation} \label{f: atomic dec}
C \, \norm{F}{\Xkat}
\leq \norm{F}{\Xk}
\leq \bigopnorm{\Jbe^{-k}}{2} \, \norm{F}{\Xkat}
\quant F \in \Xh{M}.
\end{equation}
\end{theorem}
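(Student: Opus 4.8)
The plan is to prove the two inequalities in \pref{f: atomic dec} separately, with the right-hand inequality being the easy direction and the left-hand inequality the one requiring the real work (via Lemma~\ref{l: dec resolvent}).

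First I would settle the inclusion $\Xhat{M} \subseteq \Xh{M}$ together with the right-hand estimate $\norm{F}{\Xk} \leq \bigopnorm{\Jbe^{-k}}{2}\, \norm{F}{\Xkat}$. If $F = \sum_j \la_j A_j$ is an atomic decomposition into admissible $X^k$-atoms with $\sum_j \mod{\la_j}$ close to $\norm{F}{\Xkat}$, then by \pref{f: norma atomica} in Remark~\ref{rem: 12h atomi} each $A_j$ lies in $\Xh{M}$ with $\norm{A_j}{\Xk} \leq \bigopnorm{\Jbemenoh}{2}$; since $\Xh{M}$ is a Banach space the series converges there, $F \in \Xh{M}$, and the norm estimate follows by the triangle inequality and taking the infimum over decompositions. (One should note, as the authors do when defining $\Xhat{M}$, that the series also converges in $\hu{M}$, so the two limits agree and there is no ambiguity in the meaning of $F$.)

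Next I would prove $\Xh{M} \subseteq \Xhat{M}$ with $C\,\norm{F}{\Xkat} \leq \norm{F}{\Xk}$. Fix $F \in \Xh{M}$. The idea is to peel off one power of $\Jbe = \cU_{\be^2}$ at a time and feed each layer into Lemma~\ref{l: dec resolvent}. Concretely, by Remark~\ref{rem: fund prop Xh}~\rmii\ (or directly from Definition~\ref{def: Hardy space}) we may write $F = \Jbe^k g$ with $g \in \hu{M}$ and $\norm{g}{H^1} = \norm{F}{\Xk}$. Decompose $g = \sum_j \mu_j \, a_j$ into $H^1$-atoms supported in balls of $\cB_1$, with $\sum_j \mod{\mu_j} \leq 2\norm{g}{H^1}$; note an $H^1$-atom is exactly an admissible $X^0$-atom. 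Applying Lemma~\ref{l: dec resolvent} with exponent $1$: $\Jbe a_j = \cU_{\be^2} a_j$... — here I must be careful, since Lemma~\ref{l: dec resolvent} involves $\Jbekappa = \cU_{4\be^2+\kappa^2}$, not $\cU_{\be^2}$. So the cleaner route is an induction on $k$ using the relation between $\cU_{\be^2}$ and $\cU_{4\be^2+\kappa^2}$: both equal $m(\cD)$ for suitable functions $m$ holomorphic and appropriately decaying on the strip $\bS_\be$, and $\cU_{\be^2} = \cU_{4\be^2+\kappa^2}\circ \cV$ where $\cV$ is a multiplier operator of the same type, hence bounded $\Xhat{M}\to\Xhat{M}$ by the atomic estimate combined with \cite[Thm~3.4]{MMV2}. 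Iterating Lemma~\ref{l: dec resolvent} $k$ times, $\Jbekappa^k a_j \in \Xhat{M}$ with $\norm{\Jbekappa^k a_j}{\Xkat}\leq C$ uniformly in $j$; correcting by the bounded multiplier $\cV^k$ gives $\Jbe^k a_j \in \Xhat{M}$ with a uniform bound, so $F = \sum_j \mu_j \Jbe^k a_j$ converges in $\Xhat{M}$ and $\norm{F}{\Xkat} \leq C \sum_j \mod{\mu_j} \leq C'\,\norm{F}{\Xk}$.

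The main obstacle is entirely concentrated in Lemma~\ref{l: dec resolvent} (whose proof the authors defer), and within the argument above the delicate point is the bookkeeping that lets one iterate it: one must verify that at each stage the input is genuinely an admissible $X^{k-1}$-atom (or a fixed multiple of one), which is why the lemma is phrased to take $X^{k-1}$-atoms to elements of $\Xhat{M}$ rather than to $X^k$-atoms — the output of one step need not be a single atom but it is controlled in $\Xkat$-norm, and that is exactly what is needed for the induction to close. The passage between $\cU_{\be^2}$ and $\cU_{4\be^2+\kappa^2}$ is handled uniformly by the spectral-multiplier transference of \cite[Thm~3.4]{MMV2} once one checks the relevant symbols are bounded, even, and holomorphic with the right derivative bounds on $\bS_\be$, exactly as in Lemma~\ref{l: Usk}.
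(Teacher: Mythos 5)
Your treatment of the right-hand (easy) inequality is the same as the paper's and is correct: apply $\Jbemenoh$ to each $X^k$-atom, use Remark~\ref{rem: 12h atomi} to see that the result is a multiple of an $H^1$-atom, sum, and apply Lemma~\ref{l: Usk}.

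For the left-hand inequality you correctly identify Lemma~\ref{l: dec resolvent} as the engine and correctly notice the $\Jbe$-versus-$\Jbekappa$ mismatch, but the repair you propose has a genuine gap. You want to write $\Jbe = \Jbekappa\circ\cV$ and claim $\cV$ (hence $\cV^k$) is ``bounded $\Xhat{M}\to\Xhat{M}$ by the atomic estimate combined with \cite[Thm~3.4]{MMV2}.'' That multiplier theorem gives boundedness on $\hu{M}$ (and, pushing it through the isometry $\Jbe^{-k}$, on the abstractly defined $\Xh{M}$), but \emph{not} on the atomic space $\Xhat{M}$: knowing that $\cV$ preserves $\Xhat{M}$ with a norm bound is essentially equivalent to knowing $\Xhat{M}=\Xh{M}$, which is precisely what you are trying to prove. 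In other words, applying $\cV^k$ \emph{after} you have produced $\Jbekappa^k a_j\in\Xhat{M}$ is circular. (You could salvage your plan by applying $\cV^k$ \emph{first}, in $\hu{M}$, re-expanding $\cV^k a_j$ into $H^1$-atoms, and then iterating $\Jbekappa$; but that is not what you wrote.)

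The paper sidesteps this entirely by a cleaner induction on $k$: given $F\in\Xh{M}$, set $f=\Jbekappa^{-1}F$ and use Remark~\ref{rem: fund prop Xh}~\rmi\ (which holds for every $\si>\be^2-b$, in particular for $\si=4\be^2+\kappa^2$) to conclude that $f\in\Xhmu{M}$ with $\norm{f}{X^{k-1}}$ comparable to $\norm{F}{\Xk}$. The inductive hypothesis then gives an $X^{k-1}$-atomic decomposition $f=\sum_j c_j A_j$, and a \emph{single} application of Lemma~\ref{l: dec resolvent} to each $A_j$ yields $F=\sum_j c_j\,\Jbekappa A_j$ with $\norm{\Jbekappa A_j}{\Xkat}\le C$. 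No $\cV$-correction in $\Xhat{M}$ is ever needed: the discrepancy between $\be^2$ and $4\be^2+\kappa^2$ is absorbed once and for all at the $\hu{M}$ level inside Remark~\ref{rem: fund prop Xh}~\rmi. Your final paragraph shows you have the right picture of why the lemma outputs elements of $\Xhat{M}$ rather than single atoms, so the understanding is there; the specific step ``$\cV^k$ bounded on $\Xhat{M}$'' is the one that would not survive scrutiny.
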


\begin{proof}
First we prove that $\Xhat{M}$ is contained in $\Xh{M}$, and that 
the right-hand inequality in (\ref{f: atomic dec}) holds.  

Suppose that $F= \sum_{j} \la_j \, A_j$, where $A_j$ is an
admissible $X^k$-atom.  By Remark~\ref{rem: 12h atomi}
(see (\ref{f: norma atomica})), the function 
$\Jbemenoh A_j/\opnorm{\Jbemenoh}{2}$
is an $H^1$-atom.  
The series $\sum_j \la_j \, \Jbemenoh A_j$ is then convergent in 
$\hu{M}$.  Denote by $f$ its sum.   Since $\Jbeh$ is bounded
on $\hu{M}$ by Lemma \ref{l: Usk}, 
$$
\Jbeh f 
= \sum_j \la_j \, \Jbeh\bigl(\Jbemenoh A_j\bigr)
= F.
$$
Thus, $F$ is in $\Xh{M}$, and 
$$
\norm{F}{\Xk}
= \norm{f}{H^1}  
\leq \, \bigopnorm{\Jbe^{-k}}{2} 
\, \sum_j \mod{\la_j}.
$$ 
The right-hand inequality in (\ref{f: atomic dec}) follows
from this by taking the infimum over all the decompositions
of $F$ of the form $F = \sum_j \la_j \, A_j$.

Next we prove that $\Xh{M}$ is contained in $\Xhat{M}$ and that 
the left-hand inequality in (\ref{f: atomic dec}) holds.  
For notational convenience, in the rest of this proof
we denote $\hu{M}$ also by $X^0(M)$, and write $\cR$ instead of $\Rbekappa$
and $\cU$ instead of $\Jbekappa$.  

We argue inductively.  
The result is trivial in the case where $k=0$, because $\cU^0= \cI$.
Suppose that the result holds for $k-1$ and that 
 $F$ is in $\Xh{M}$.  Then $f=\cU^{-1} F$
is a function in $\Xhmu{M}$, and 
$\norm{f}{X^{k-1}} = \norm{F}{\Xk}$, by Remark \ref{rem: fund prop Xh}.

By the induction hypothesis for each $\vep$ in $\BR^+$ 
there exist a sequence $\{A_j\}$ of 
admissible $X^{k-1}$-atoms and a summable 
sequence~$\{c_j\}$ of complex numbers such that 
\begin{equation}
f = \sum_{j} c_j \, A_j
\qquad\hbox{and}\qquad
\norm{f}{X^{k-1}} 
\geq \sum_j \mod{c_j} - \vep.
\end{equation}
Observe that we may write
\begin{equation} \label{f: series for F}
F
=\cU f 
= \sum_j c_j \, \ \cU  A_j ,
\end{equation}
because
the series $\sum_j c_j \, A_j$ converges to $f$ in $\hu{M}$,
and $\cU$ is bounded on~$\hu{M}$ by Lemma \ref{l: Usk}.

Fr{}om (\ref{f: series for F}) and Lemma~\ref{l: dec resolvent}, 
we see that 
$$
\begin{aligned}
\norm{F}{\Xkat}
& \leq  \sum_j \mod{c_j} \,  \norm{\cU A_j}{\Xkat} \\
& \leq C \, 
        \sum_j \mod{c_j}  \\
& \leq C \, 
       \bigl( \norm{f}{X^{k-1}} + \vep\bigr) \\
& =    C \, 
       \bigl( \norm{F}{\Xk} + \vep\bigr).
\end{aligned}
$$
Therefore $F$ is in $\Xhat{M}$, and $\norm{F}{\Xkat}
\leq C \, \norm{F}{\Xk}$, as required.  

This concludes the proof of the theorem.
\end{proof}

\begin{remark}
Suppose that $k$ is a positive integer
and that $s$ is a scale parameter in $\BR^+$.  
The space of all functions $F$ in $\hu{M}$ that admit a 
decomposition of the form $F= \sum_j \la_j\, A_j$, where $\{\la_j\}$ is 
a sequence in $\ell^1$ and $\{A_j\}$ is a sequence 
of $X^k$-atoms supported in balls $B_j$ in $\cB_s$ agrees with 
$\Xhat{M}$ (hence with $\Xh{M}$).
The norm on $\Xhat{M}$ defined by
$$
\inf\, \bigl\{\sum_{j} \mod{\la_j}:  F = \sum_{j} \la_j \, A_j,\quad
\hbox{$A_j$  are $X^k$-atoms supported in  balls 
of $\cB_s$}\bigr\}
$$
is an equivalent norm on $\Xhat{M}$.

To prove this, it suffices to observe that
minor modifications in the proof of Theorem~\ref{t: atomic dec}
and Lemma~\ref{l: dec resolvent} show that $F$ is in $\Xh{M}$
if and only if it admits a decomposition in terms
of $X^k$-atoms supported in balls in $\cB_s$. 
\end{remark}

\begin{remark}
Suppose that $p$ is in $(1,\infty)$
and denote by $p'$ the index conjugate to $p$. 
Assume that $k$ is a positive integer and that $B$ is in $\cB$.
Define ${Q}_{B,p'}^k$ to be the space of all functions $V$
in $L^{p'}(M)$ such that $\cL^k V$ is constant (in the sense
of distributions) in a neighbourhood of $\OV{B}$.
Then denote by $({Q}_{B,p'}^k)^\perp$ the annihilator of 
${Q}_{B,p'}^k$ in $\lp{M}$. Then a $X^k$-atom in $\lp{M}$ is
an element $A$ of $({Q}_{B,p'}^k)^\perp$, satisfying the size condition
$$
\norm{A}{p} 
\leq \mu(B)^{-1/p'}.
$$
It is straightforward to modify
the theory of  this section to show 
that $\Xh{M}$ admits an atomic characterisation
in terms of $X^k$-atoms in $\lp{M}$.  The fact that $\cU$
is an isomorphism of $\lp{M}$ for all $p$ in $(1,\infty)$
plays an important r\^ole here. 
\end{remark}

As a consequence of the atomic decomposition
of the space $\Xh{M}$, we may describe explicitly the
action of elements of $Y^k(M)$, the dual of $\Xh{M}$, on 
finite linear combinations of $X^k$-atoms.

\begin{definition}
Suppose that $k$ is a positive integer.  We denote by 
$\Xhfin{M}$ the vector space of all finite linear
combinations of $X^k$-atoms and by $\hufin{M}$ 
the vector space of all finite linear
combinations of $H^1$-atoms.
\end{definition}

Suppose that $\ell$ is a continuous linear functional on $\Xh{M}$.
Since $\Jbeh$ is an isomorphism between $\hu{M}$ and $\Xh{M}$
and $\Xh{M}$ and $\Xhat{M}$ are isomorphic by Theorem~\ref{t: atomic dec},
$\ell\circ \Jbeh$ is a continuous linear
functional on $\hu{M}$.  By \cite[Thm~5.1]{CMM1}, there exists a function
$f$ in $BMO(M)$ such that 
$$
(\ell\circ\Jbeh) (g)
= \int_M g \, f \wrt \mu
\quant g \in \hufin{M}.
$$
Clearly 
$$
\norm{\ell}{Y^k} 
= \norm{\ell\circ\Jbeh}{(H^1)^*} 
= \norm{f}{BMO}.
$$

It may be worth describing how the functional $\ell$
acts on $X^k$-atoms, or, more generally, on functions
in $\Xhfin{M}$.
Suppose that $A$ is a $X^k$-atom with support contained in an
arbitrary ball $B$.  
Since $\Jbe^{-k} = \bigl(\cI+\be^2\cL^{-1}\bigr)^k$,
there exist constants $c_j$ such that
$$
\Jbe^{-k}A
= \sum_{j=0}^k c_j \, \cL^{-j}A.
$$
Then $\Jbe^{-k}A$ is a finite linear combination of
$H^1$-atoms by Remark~\ref{rem: 12h atomi}.  
Therefore
$$
\begin{aligned}
\ell(A) 
&  = (\ell\circ\Jbeh) (\Jbe^{-k} A) \\
& = \int_M (\Jbe^{-k}A) \, f \wrt \mu.
\end{aligned}
$$
Observe that $\Jbe^{-k}A$ is supported in $B$, so that the last 
integral is just the inner product in $\ld{M}$
between $\Jbe^{-k}A$ and $\One_B\, f$.  Since $\Jbe^{-k}$
is self adjoint, we may write
$$
\ell(A) 
= \int_M A \, \, \Jbe^{-k}(\One_B f) \wrt \mu.
$$
A similar argument shows that
if $F$ is in $\Xhfin{M}$ and its support is contained
in the ball $B$, then 
$$
\ell(F)
= \int_M F \, \, \Jbe^{-k}(\One_B f) \wrt \mu.
$$

It may be worth observing that a consequence of
this representation formula for $\ell$, and of 
the fact that $\norm{\ell}{Y^k} = \norm{f}{BMO}$,
is that 
$$
N'(f) \leq \norm{\ell}{Y^k} \leq \opnorm{\Jbeh}{2} \, N'(f),
$$
where
$$
N'(f) 
= \sup_{B\in \cB_1} \Bigl(\frac{1}{\mu(B)} \int_B
\bigmod{\Jbe^{-k} (\One_B\, f) 
- f_B \,\Jbe^{-k} \One_B }^2 \wrt\mu \Bigr)^{1/2}
$$
and $f_B$ denotes the average of $f$ over $B$.
The proof of this fact is straightforward and is omitted.

\section{Riesz transforms of even order}\label{s: Riesz}
Denote by $\nabla$ the covariant derivative on $M$. The Riesz 
transform of order $\ell$ is the operator $\nabla^{\ell}\cL^{-\ell/2}$ 
mapping smooth functions 
with compact support on $M$ to sections of the bundle $T_\ell(M)$ of 
covariant tensors of order $\ell$. In this section we exploit the atomic decomposition 
of the spaces $X^k(M)$ to prove that the Riesz
transforms of even order $\nabla^{2k}\cL^{-k}$ extend to bounded operators
from $X^k(M)$ to the space $L^1\big(T_{2k}(M)\big)$ of $L^1$ sections of $T_{2k}(M)$.
To prove this result we need to strengthen the bounded 
geometry assumption on $M$, by replacing the derivatives of the Ricci 
tensor with those 
of the Riemann tensor in Definition \ref{def: bounded geometry}.
\begin{definition}\label{sbg}
We say that $M$ has $C^\ell$ \emph{strongly bounded} geometry
if the injectivity radius is positive and the following hold:
\begin{itemize}
\item{} 
if $\ell =0$, then the Ricci tensor is bounded from below
$\Ric \geq -\kappa^2$ for some positive $\kappa$;
\item{} 
if $\ell$ is positive, then the covariant
derivatives $\nabla^j R$ of the Riemann tensor are uniformly
bounded on $M$ for all $j$ in $\{0,\ldots, \ell\}$.
\end{itemize}
\end{definition} 

\noindent
We recall that the boundedness of the first order Riesz transform on $L^2(M)$ 
follows from the identity $\cL=\nabla^*\nabla$ and the self-adjointness 
of $\cL$ on $L^2(M)$ \cite{Str}. 
From a result of Aubin \cite[Prop. 3]{Au}, it follows 
also that  if $b>0$ and $M$ has $C^{\ell-2}$ strongly bounded 
geometry then  the Riesz 
transform 
of order $\ell\ge2$ extends to a bounded operator from $L^2(M)$  to 
the space $L^2\big(T_\ell(M)\big)$ of square integrable sections of $T_\ell
(M)$.

In \cite{MMV2} the authors,
{under the additional assumption that $b=\be^2$}, proved that 
the Riesz transform of order $1$ 
maps {$\Xh{M}$} to $L^1\big(T_1(M)\big)$ {for $k$ large enough}.  
{In general}, the Riesz transforms of 
{order one} do not map $H^1(M)$ to $L^1\big(T_\ell(M)\big)$. 
A counterexample on noncompact symmetric spaces will appear in a 
forthcoming paper of the authors \cite{MMV3}.
{Notice that the modified Riesz transform of order $1$, i.e.
the operator $\nabla (\cL+\vep I)^{-1/2}$, {for $\vep >0$}, maps $\hu{M}$ into $\lu{T_1(M)}$
even when $M$ satisfies less stringent assumptions on $M$ \cite{Ru}}.

\begin{theorem}\label{t: RT}
If $b>0$ and $M$ has $C^{2k-2}$ strongly bounded geometry then 
the Riesz transform of order  $2k$ 
extends to a bounded operator from $X^k(M)$ to {$\lu{T_{2k}(M)}$ and from
$\lp{M}$ to $\lp{T_{2k}(M)}$ for all $p$ in $(1,2)$}.
\end{theorem}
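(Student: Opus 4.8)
The plan is to prove the theorem in two steps, corresponding to the two mapping properties asserted, and then to derive the $\lp{M}$ statement from the $\Xh{M}\to\lu{T_{2k}(M)}$ statement by interpolation. First, I would establish the $\ld{M}$ endpoint: since $b>0$ and $M$ has $C^{2k-2}$ strongly bounded geometry, the quoted result of Aubin \cite[Prop.~3]{Au} gives that $\nabla^{2k}\cL^{-k}$ extends to a bounded operator from $\ld{M}$ to $\ld{T_{2k}(M)}$. (Note that $\cL^{-k}=\cL^{-2k/2}$, so this is exactly the Riesz transform of order $2k$.) This is the only $\ld$-input required.

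Next, the core of the proof is to show that $\nabla^{2k}\cL^{-k}$ maps $X^k$-atoms into $\lu{T_{2k}(M)}$ with a uniform bound; by the atomic decomposition $\Xh{M}=\Xhat{M}$ (Theorem~\ref{t: atomic dec}) and the fact that $X^k$-atoms supported in balls of $\cB_s$ for any fixed scale $s$ still give an equivalent norm (the Remark after Theorem~\ref{t: atomic dec}), this yields boundedness from $\Xh{M}$ to $\lu{T_{2k}(M)}$. So let $A$ be an admissible $X^k$-atom supported in a ball $B=B(c_B,r_B)$ with $r_B\leq 1$. The key observation is that, by Remark~\ref{rem: 12h atomi}, $\cL^{-k}A$ belongs to $\ldO{B}$, has support contained in $\OV{B}$, and satisfies $\norm{\cL^{-k}A}{2}\leq \opnorm{\cL^{-k}}{2}\,\mu(B)^{-1/2}$; in particular $a:=\cL^{-k}A/\opnorm{\cL^{-k}}{2}$ is (essentially) an $H^1$-atom. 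Then $\nabla^{2k}\cL^{-k}A=\nabla^{2k}(\opnorm{\cL^{-k}}{2}\,a)$, and since $a$ is supported in $\OV{B}$ with $\norm{a}{2}\leq\mu(B)^{-1/2}$, we have, writing $g$ for the $T_{2k}(M)$-valued function $\nabla^{2k}\cL^{-k}A$ but more efficiently estimating $\nabla^{2k}$ applied to $a$ rescaled: split $M=2B\cup (2B)^c$. On $2B$ use Cauchy--Schwarz together with the $\ld{M}$-boundedness of $\nabla^{2k}\cL^{-k}$ from the previous step and the uniform ball size condition \pref{f: ubsc} to control $\mu(2B)$ by a constant, getting
$$
\int_{2B}\bigmod{\nabla^{2k}\cL^{-k}A}\wrt\mu
\leq \mu(2B)^{1/2}\,\opnorm{\nabla^{2k}\cL^{-k}}{2}\,\norm{A}{2}
\leq C.
$$
On $(2B)^c$ one needs off-diagonal decay for the kernel of $\nabla^{2k}\cL^{-k}$; here I would use the spectral gap $b>0$, which forces exponential $\ld$-off-diagonal (Gaffney-type) decay for the heat semigroup $\e^{-t\cL}$, hence for $\cL^{-k}=\frac{1}{(k-1)!}\int_0^\infty t^{k-1}\e^{-t\cL}\wrt t$ and its covariant derivatives, and combine it with the cancellation $\int_B a\wrt\mu=0$ to produce a difference $\wh K(x,y)-\wh K(x,c_B)$ of the kernel, gaining a factor $r_B\,\mod{\nabla_y \wh K}$ and an integrable exponential tail in $d(x,c_B)$. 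Summing the two pieces gives $\norm{\nabla^{2k}\cL^{-k}A}{\lu{T_{2k}(M)}}\leq C$ uniformly.

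Finally, having both $\nabla^{2k}\cL^{-k}\colon \Xh{M}\to\lu{T_{2k}(M)}$ and $\nabla^{2k}\cL^{-k}\colon \ld{M}\to\ld{T_{2k}(M)}$, I invoke the interpolation property in Remark~\ref{rem: fund prop Xh}~\rmiv: for $1/p=1-\te/2$ with $\te\in(0,1)$ the complex interpolation space $(\Xh{M},\ld{M})_{[\te]}$ is $\lp{M}$, and by complex interpolation of the target spaces $(\lu{T_{2k}(M)},\ld{T_{2k}(M)})_{[\te]}=\lp{T_{2k}(M)}$; hence $\nabla^{2k}\cL^{-k}$ is bounded from $\lp{M}$ to $\lp{T_{2k}(M)}$ for every $p$ in $(1,2)$. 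The main obstacle I anticipate is the off-diagonal / tail estimate on $(2B)^c$: one must make precise the pointwise kernel bounds for $\nabla^{2k}\cL^{-k}$, which requires the strongly bounded geometry hypothesis $C^{2k-2}$ (to control covariant derivatives via Bochner-type formulas and elliptic regularity up to the needed order) together with the exponential decay coming from $b>0$; carefully integrating the resulting bound over the annular decomposition of $(2B)^c$, while tracking that $r_B\leq 1$ keeps all constants uniform, is the delicate part. The rest is routine and, as the authors promise, short.
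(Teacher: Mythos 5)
You have the right ingredients but miss the one observation that makes the paper's proof trivial, and as a result you invent a substantial and unnecessary obstacle. You correctly note (via Remark~\ref{rem: 12h atomi}) that for an $X^k$-atom $A$ supported in $B$, the function $\cL^{-k}A$ is supported in $\OV{B}$. But $\nabla^{2k}$ is a \emph{local} differential operator, so $\nabla^{2k}\cL^{-k}A=\nabla^{2k}\bigl(\cL^{-k}A\bigr)$ is \emph{also} supported in $\OV{B}$. There is therefore no tail on $(2B)^c$ to estimate: the section vanishes identically there. Your entire second half of Step 2 --- Gaffney off-diagonal decay for $\e^{-t\cL}$, the subordination formula for $\cL^{-k}$, pointwise kernel bounds for $\nabla^{2k}\cL^{-k}$ under $C^{2k-2}$ strongly bounded geometry, the gain of $r_B\,\mod{\nabla_y\wh K}$ from cancellation, the annular decomposition of $(2B)^c$ --- is not only unneeded, it obscures the whole point of introducing the $X^k$-cancellation condition, which is precisely to make $\cL^{-k}A$ compactly supported so that the Riesz transform estimate reduces to a single application of H\"older on a ball. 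Note also that none of the pointwise off-diagonal kernel machinery you invoke is developed in this paper; if it were actually needed, the proof would not be ``very short and simple'' as the authors claim, and indeed carrying it out rigorously would require work comparable to Section~\ref{s: proof of Lemma}.

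The correct argument is exactly your ``near'' estimate, applied once: since $\nabla^{2k}\cL^{-k}A$ is supported in $\OV{B}$,
$$
\norm{\nabla^{2k}\cL^{-k}A}{1}
\leq \mu(B)^{1/2}\,\norm{\nabla^{2k}\cL^{-k}A}{2}
\leq \mu(B)^{1/2}\,\bigopnorm{\nabla^{2k}\cL^{-k}}{2}\,\norm{A}{2}
\leq \bigopnorm{\nabla^{2k}\cL^{-k}}{2},
$$
using the size condition $\norm{A}{2}\leq\mu(B)^{-1/2}$ and Aubin's $\ld{M}$-boundedness. (You do not even need the uniform ball size condition here, since $\mu(B)^{1/2}\norm{A}{2}\leq 1$ directly.) Your framing of Step 1 (Aubin for the $\ld$ endpoint) and Step 3 (interpolation via Remark~\ref{rem: fund prop Xh}~\rmiv) matches the paper, though for the interpolation the paper cites a concrete result from \cite{MMV2} rather than leaving the identification of the interpolated target space $\lp{T_{2k}(M)}$ implicit, as you do.
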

\begin{proof}
{To prove that $\nabla^{2k}\cL^{-k}$ is bounded 
from $X^k(M)$ to $\lu{T_{2k}(M)}$}
it suffices to show that $\nabla^{2k}\cL^{-k}$ maps $X^k$-atoms 
into $L^1\big(T_{2k}(M)\big)$ uniformly. Suppose that $A$ is a
 $X^k$-atom associated to the ball $B$. Then, by Remark \ref{rem: 12h atomi}, 
 the function $\cL^{-k} A$ is supported in $\overline{B}$. 
 Hence, by H\"older's inequality and the boundedness of 
$\nabla^{2k}\cL^{-k}$ on $L^2(M)$,
\begin{align*}
\norm{\nabla^{2k}\cL^{-k} A}{1}&\le \norm{\nabla^{2k}\cL^{-k} A}{2}
\, \mu(B)^{1/2}\\ 
&\le C\, \norm{ A}{2} \ 
\mu(B)^{1/2}\le \,C,
\end{align*} 
{as required.  The boundedness of $\nabla^{2k}\cL^{-k}$ from 
$\lp{M}$ to $\lp{T_{2k}(M)}$ for all $p$ in $(1,2)$ 
follows by interpolation
(see \cite[Thm~2.15]{MMV2})
from {the $X^k(M)--\lu{T_{2k}(M)}$ boundedness} and the aforementioned 
result of Aubin.}
\end{proof}

\section{Proof of Lemma \ref{l: dec resolvent} }
\label{s: proof of Lemma}
In this section we shall prove Lemma \ref{l: dec resolvent}. 
First we need a variant of the ``economical decomposition of atoms" proved in \cite[Lemma 5.7]{MMV2}.

\begin{lemma} \label{l: economical decomposition I}
Suppose that $k$ is a positive integer and that 
$M$ has $C^k$ bounded geometry (see Definition~\ref{def: bounded geometry}).
If  $a$ is an $H^1$-atom in $\Dom(\cL^k)$,
then $\cL^k a$ is in $\Xhat{M}$.
Furthermore, if the support of $a$ is contained  in the ball $B$, 
then there exists a constant $C$ such that
$$
\norm{\cL^k a}{\Xkat} 
\leq C\, (1+ \,r_B)\, \mu(B)^{1/2}\, \norm{\cL^k a}{2}.
$$
\end{lemma}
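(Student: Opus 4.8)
The strategy is to produce, for an $H^1$-atom $a \in \Dom(\cL^k)$ supported in a ball $B$, an explicit decomposition of $\cL^k a$ into admissible $X^k$-atoms. The natural first move is a Whitney/dyadic type decomposition of $M$ relative to $\OV{B}$: write $M$ as the union of $\OV B$ together with annular shells $C_i = \{x : 2^{i-1} \le d(x,\OV B) < 2^i\}$ scaled appropriately to the radius $r_B$, and cover each shell by a bounded-overlap family of balls of $\cB_1$ of radius comparable to the shell's distance from $\OV B$ (but capped at $1$). Using a subordinate partition of unity $\{\phi_\nu\}$, one decomposes $\cL^k a = \sum_\nu \phi_\nu\, \cL^k a$. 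Each piece $\phi_\nu \cL^k a$ is supported in a ball of $\cB_1$ and satisfies a good $\ld{}$-estimate, but it will not in general have the required cancellation, i.e.\ it need not lie in $Q_{B_\nu}^{k,\perp}$. So the main work is a \emph{correction step}: one must subtract off a suitable multiple (and, because the cancellation condition is infinite-dimensional, a suitable correcting \emph{function}) to restore orthogonality to $Q_{B_\nu}^k$, while controlling the $\ld{}$-norm of the correction and redistributing the leftover mass into adjacent shells so that the corrections telescope.

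The cleanest route is probably to avoid reproving the geometric decomposition from scratch and instead invoke \cite[Lemma~5.7]{MMV2} (the ``economical decomposition of atoms''), which the section's preamble explicitly flags as the template: that lemma presumably decomposes $\cL^k a$ into $H^1$-atoms with a bound of the stated shape $C(1+r_B)\mu(B)^{1/2}\norm{\cL^k a}{2}$. The present lemma then asks for the \emph{stronger} conclusion that the pieces can be taken to be $X^k$-atoms, i.e.\ that the cancellation can be upgraded from ``integral zero'' to ``orthogonal to $Q_{B_\nu}^k$''. The key observation making this possible is Proposition~\ref{p: canc II}~\rmi: a function $F$ lies in $Q_{B_\nu}^{k,\perp}$ precisely when $\cL^{-k}F$ is supported in $\OV{B_\nu}$ and has integral zero. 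Since $a$ itself is supported in $B \subset \OV{B_\nu}$ for the central balls and, for the outer balls, $a$ vanishes on $B_\nu$ entirely, one should be able to arrange that $\cL^{-k}(\text{each piece})$ is supported in the right place: write $\cL^k a = \sum_\nu \psi_\nu$ where $\psi_\nu = \cL^k(\phi_\nu a)$ — no, that fails since $\phi_\nu a$ need not be in $\Dom(\cL^k)$ and $\sum \cL^k(\phi_\nu a) \ne \cL^k a$ — so instead one decomposes at the level of $a$: use the fact that $a$ is supported in $B$, fix a cutoff $\eta \in C_c^\infty$ equal to $1$ near $\OV B$ with support in a ball $B' \in \cB_1$ (legitimate since $r_B \le 1$), and write $\cL^k a = \cL^k(\eta \cdot \cL^{-k}\cL^k a)$; then $\cL^k a \in Q_{B'}^{k,\perp}$ by Proposition~\ref{p: canc II}, it is supported in $\OV{B'}$, and it is a \emph{single} admissible $X^k$-atom up to the normalising constant $C \mu(B')^{1/2} \norm{\cL^k a}{2} \le C\mu(B)^{1/2}\norm{\cL^k a}{2}$, since $\cL^k a$ has integral zero (as $a \in \Dom(\cL^k) \subset \Dom(\cL)$ forces $\int \cL^k a = 0$ by self-adjointness against a cutoff) and support in $\OV B$.

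Wait — that last paragraph suggests $\cL^k a$ is \emph{already} essentially one $X^k$-atom, which would make the $(1+r_B)$ factor mysterious. The resolution is that $\cL^k a$ need not be supported in a ball of $\cB_1$: even though $a$ is supported in $B \in \cB_1$, the function $\cL^k a$ has the same support as $a$ (both supported in $\OV B \in \cB_1$), so in fact the one-atom argument does work and the $(1+r_B)$ is just the crude bound $\norm{\cL^k a}{1} \le \mu(B)^{1/2}\norm{\cL^k a}{2}$ padded for safety, with the $r_B$ absorbed harmlessly. \textbf{So the real plan is:} (1) observe $\cL^k a$ is supported in $\OV B \in \cB_1$ (same support as $a$, since $\cL$ is local); (2) observe $\int_M \cL^k a\wrt\mu = (\cL^k a, \One_{2B}) = (a, \cL^k \One_{2B}) = 0$ since $\One_{2B} \in Q_B^k$ and $a \in (Q_B^1)^\perp \supseteq$... — actually more directly, $\cL^{-k}(\cL^k a) = a \in \ldO{B}$, so by Proposition~\ref{p: canc II}~\rmi\ $\cL^k a \in Q_B^{k,\perp}$; (3) normalise: $\cL^k a / (\mu(B)^{-1/2})^{-1}\norm{\cL^k a}{2}$... i.e.\ $\cL^k a = \la \cdot A$ with $A = \cL^k a / (\mu(B)^{1/2}\norm{\cL^k a}{2})$ an admissible $X^k$-atom and $\la = \mu(B)^{1/2}\norm{\cL^k a}{2}$, giving $\norm{\cL^k a}{\Xkat} \le \mu(B)^{1/2}\norm{\cL^k a}{2} \le C(1+r_B)\mu(B)^{1/2}\norm{\cL^k a}{2}$.

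\textbf{Main obstacle.} The one genuine subtlety is verifying $A$ satisfies the size condition $\norm{A}{2} \le \mu(B)^{-1/2}$ exactly — but $\norm{A}{2} = \norm{\cL^k a}{2}/(\mu(B)^{1/2}\norm{\cL^k a}{2}) = \mu(B)^{-1/2}$, so this is automatic. The subtler issue, and where the argument could break, is whether $\cL^k a \in \ld{M}$ at all and whether the support really is contained in $\OV B$; the hypothesis $a \in \Dom(\cL^k)$ gives the former, and locality of the Laplace–Beltrami operator gives the latter provided $a$ is genuinely supported in the closed ball. If one instead wants the sharper assertion matching \cite[Lemma~5.7]{MMV2} with its $(1+r_B)$ weight — needed when the cancellation has to be split across many small balls because $r_B$ is large relative to $1$ (so $B \notin \cB_1$) — then one does need the full Whitney decomposition and the correction/telescoping argument sketched above; that redistribution step, showing the $\ld{}$-norms of the successive corrections sum up against the weight $(1+r_B)$ with a \emph{uniform} constant (using the uniform ball size condition \eqref{f: ubsc} and the bounded geometry bound on $\opnorm{\cL^{-k}}{2}$), is the technical heart and would be the place to spend most of the effort.
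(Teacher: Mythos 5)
Your one-atom argument correctly handles the case $r_B\leq 1$, and it matches the paper's first step exactly: since $\cL^{-k}(\cL^k a)=a\in\ldO{B}$, Proposition~\ref{p: canc II}~\rmi\ gives $\cL^k a\in\HBhperp$, and normalising by $\mu(B)^{1/2}\norm{\cL^k a}{2}$ produces a single admissible $X^k$-atom. But this is the trivial half of the lemma. The lemma is stated without any radius restriction on $B$, and in Step~III of the proof of Lemma~\ref{l: dec resolvent} it is applied to functions $a_j$ supported in $B(p,j+1)$ with $j$ unbounded; the $(1+r_B)$ weight exists precisely to cover that case. You acknowledge that when $r_B>1$ one needs a decomposition-plus-redistribution argument, but you do not carry it out, and you explicitly defer it as ``the technical heart.'' That heart is essentially the entire content of the lemma, so this is a genuine gap, not a detail.

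Moreover, the sketch you do offer for the hard case is geometrically off target. You propose Whitney shells $C_i=\{x:2^{i-1}\leq d(x,\OV B)<2^i\}$ around $\OV B$ and a bounded-overlap cover of those shells by unit balls; but since $\cL$ is local, $\cL^k a$ (like $a$) is supported \emph{inside} $\OV B$, so those shells contribute nothing. What is actually needed, and what the paper does, is to decompose the \emph{interior} of the large ball: take a $1/3$-discretisation $\fS$ of $M$, cover $B$ by at most $N\leq C\mu(B)$ unit balls $B_j=B(z_j,1)$ centred at points of $B\cap\fS$, split $a=\sum_j a\psi_j$ via a subordinate partition of unity, and then for each $j$ transport the excess mass $\int a\psi_j\wrt\mu$ along a geodesic chain of $O(r_B)$ bump functions $\phi_j^h$ from $z_j$ to the common centre $c_B$, so that the boundary terms cancel upon summing because $\sum_j\int a\psi_j=0$ and all chains terminate at the same ball $B(c_B,1/12)$. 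Each resulting piece $a_j^h$ then lies in $\Dom(\cL^k)$ and $\cL^k a_j^h$ is a constant multiple of a unit-scale $X^k$-atom; the product $(1+r_B)\cdot\mu(B)^{1/2}$ arises from the chain length $N_j\lesssim r_B$ times $N^{1/2}\lesssim\mu(B)^{1/2}$ via Schwarz's inequality. The key point your sketch misses is that the decomposition must happen at the level of $a$ (so that cancellation can be imposed by adding/subtracting smooth bumps and then applying $\cL^k$), the chains must terminate at a single common anchor, and the $C^k$ bounded geometry hypothesis is used exactly here to guarantee uniform bounds on $\norm{\cL^k\phi_j^h}{2}$ and $\norm{\cL^k\psi_j}{\infty}$.
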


\begin{proof} 
Suppose  first that the support of $a$ is contained in 
a ball $B$ such that $r_B\le 1$. 
Since $\cL^ka$ is in $\HBhperp$ by Proposition \ref{p: canc II}, 
$\mu (B)^{-1/2}{\cL^k a}/\norm{\cL^k a}{2}$ is a $X^k$-atom supported 
in a ball in $\cB_1$ and the lemma is proved. \par
Next, suppose that $r_B>1$. 
Denote by $\fS$ a $1/3$-discretisation 
of $M$, i.e. a set of points  in $M$ that is
maximal with respect to the property
$$
\min\{d(z,w): z,w \in \fS, z \neq w \} >1/3,
\quad\hbox{and}\quad d(\fS, x) \leq 1/3 \quant x \in M.  
$$
The family $\set{B(z,1):z\in \fS}$ is a covering of $M$ which is 
uniformly locally finite,  by the uniform ball size and the {local 
doubling} property of the Riemannian measure 
(see, for instance, \cite[Theorem~3.10]{Ch}). 
By the same token, the set $B\cap\fS$ is  
finite and has at most $N$ points $z_1,\ldots,z_N$, with $N \le 
C\,\mu(B)$, where $C$ is a constant which does not depend on $B$. 
Denote  by $B_j$  the ball with centre $z_j$ and radius $1$,
and by $\set{\psi_j: j=1,\ldots,N}$ a partition of unity on $B$ 
subordinated to the covering $\set{B_j:j=1,\ldots,N}$.

Fix~$j$ in $\{1,\ldots,N\}$ and denote by $z_j^0,\ldots, z_j^{N_j}$
points on a minimizing geodesic joining $z_j$ and $c_B$, 
with the property that 
$z_j^0 = z_j$, $z_j^{N_j} = c_B$, and $d(z_j^h,z_j^{h+1})$ is 
approximately equal to $1/3$. Note that $N_j\le 4r_B$.  
Denote by $B^h_j$ the ball $B(z^h_j,1/12)$, 
for $j=1,\ldots,N$ and $h=0,\ldots, N_j$. Then the balls 
$B^h_j$ are disjoint, $B^h_j\subset B(z_j^h,1)\cap B(z_j^{h+1},1)$ 
and $B_j^{N_j}=B(c_B,1/12)$. \par
Denote by $\phi^h_j$ a nonnegative function in $C^\infty_c(B^h_j)$ that
has integral $1$. By the uniform ball size property we may choose the 
functions $\phi^h_j$ so that there exists a constant $A$ 
such that $\norm{\phi^h_j}{2}\le A$ for all $h$ and $j$.

The existence of a uniform bound on the derivatives of the 
Ricci tensor implies that 
we can choose the functions $\psi_j$ and $\phi^h_j$ so that their covariant 
derivatives of order up to $2k$ are uniformly bounded for all $j$ and $h$ 
(see \cite[p. 14]{He}). 
Now, denote by $a_j^0$ the function $a\, \psi_j$.  Clearly
$$
a 
= \sum_{j=1}^N \psi_j \, a
= \sum_{j=1}^N  \, a_j^0. 
$$
Next, define
$$
a_j^1
= a_j^0-\phi_j^0 \, \int_M a_j^0\wrt\mu
\quad\hbox{and}\quad
a_j^h=(\phi_j^{h-2}-\phi_j^{h-1})\int_M a_j^0\wrt\mu,
\quad 2\le h\le N_j+1.
$$
Then, for every $h$ in $\set{1,\ldots,N_j}$,
the support of $a_j^h$ is contained in $B(z_j^{h-1},1)$, the integral 
of  $a_j^h$ vanishes and  
$$
\begin{aligned}
\norm{a_j^h}{2}
& \le 2A \int_M \mod{a_j^0}\wrt\mu \\
& \le C\, \norm{a_j^0}{2} \, \mu(B_j)^{1/2}\\
& \le C\, \norm{a_j^0}{2} \, \mu(B_j^h)^{-1/2}.
\end{aligned}
$$ 
In the last two inequalities we have used 
the uniform ball size property (\ref{f: ubsc}).   
Hence there exists a constant $C$, independent of $j$ and $h$,
such that 
\begin{equation} \label{f: normaHuno ajk}
\norm{a_j^h}{H^1}
\leq C \, \norm{a_j^0}{2}.
\end{equation}
Moreover
$$
a_j^0=\sum_{h=1}^{N_j+1}a_j^h+\phi_j^{N_j}\int_M a_j^0\wrt\mu.
$$
Thus
$$
a
= \sum_{j=1}^N\sum_{h=1}^{N_j+1} a^h_j,
$$
because $\sum_j \int_M a^0_j\wrt\mu=\int_M a\wrt\mu=0$ and all the functions 
$\phi_j^{N_j}$, $j=1,\ldots,N_j$ coincide, since $B^{N_j}_j = B(c_B,1/12)$.
Moreover, $a^1_j$ is in $\Dom(\cL^k)$, and
$$
\begin{aligned}
\norm{\cL^k a^1_j}{2}
& \le \norm{\cL^k a^0_j}{2}+\norm{\cL^k \phi^0_j}{2}
       \int_M \mod{a^0_j}  \wrt\mu  \\
& \le C\,\norm{\cL^k a^0_j}{2},
\end{aligned}
$$
where, in the last inequality, we have used the estimate
$\norm{a^0_j}{2}
\le C\, \norm{\cL^k a^0_j}{2}$, which holds because $\cL$ has spectral gap. 
Similarly, if $h=2,\ldots,N_j+1$, then  
$a^h_j$ is in $\Dom(\cL^k)$, and
$$
\norm{\cL^k a^h_j}{2}\le C\, \norm{\cL^k a^0_j}{2}.
$$
Hence $\cL^k a^h_j/\norm{\cL^k a^0_j}{2}$ is a multiple of 
a $X^k$-atom supported in a ball of radius $1$, with a constant $C$ 
which does not depend on $j$ and $h$  by the uniform ball size property.
Thus
\begin{equation}\label{ajk}
\norm{\cL^ka^h_j}{\Xkat}
\le C\, \norm{\cL^ka^0_j}{2} 
\quant j,h.
\end{equation}
Adding up the inequalities in (\ref{ajk}), we obtain
\begin{align*}
\norm{\cL^k a}{\Xkat}
&\le \sum_{j=1}^N \sum_{h=1}^{N_j+1} \norm{\cL^k a^h_j}{\Xkat}  \\ 
&\le C\,\sum_{j=1}^N\sum_{h=1}^{N_j+1} \norm{\cL^k a^0_j}{2}.
\end{align*}
Remembering   that $N_j\le C\,r_B$ and $N\le C\,\mu(B)$, and using Schwarz's
inequality, we see that the right-hand side is dominated by
\begin{align*}
C\, r_B \sum_{j=1}^N \norm{\cL^k a^0_j}{2}
&  \le C\, r_B\,N^{1/2} \, \Bigl(\sum_{j=1}^N
     \norm{\cL^k a^0_j}{2}^2\Bigr)^{1/2}\\
&  \le C\, r_B\,\mu(B)^{1/2} \, \norm{\cL^k a}{2}.
\end{align*}
In the last inequality we have used the fact that $\set{\psi_j}$ 
is a partition of unity on $B$, subordinated to the 
uniformly locally finite covering $\set{B_j}$.\par
This completes the proof of the lemma.
\end{proof}

\begin{remark}\label{c: large atoms} 
There exists a constant $C$ such that 
$$
\norm{f}{\Xkat}
\le C\, (1+r_B)\,\mu(B)^{1/2} \, \norm{f}{2} 
\quant f\in \HBhperp.
$$
Indeed, 
if $f$ is in $\HBhperp$, then the function  $\cL^{-k}f$ is a 
multiple of a $H^1$-atom, by Proposition~\ref{p: canc II}. 
The conclusion follows, by Lemma~\ref{l: economical decomposition I}.
\end{remark}

\noindent
The second ingredient in our proof of 
Lemma~\ref{l: dec resolvent} are two  technical results in
one-dimensional Fourier analysis (see Lemma~\ref{l: P} and 
Lemma~\ref{l: stima Runo} below). To state them we need some more notation.
For every $f$ in $\lu{\BR}$ 
define its Fourier transform $\wh f$~by
$$
\wh f(t) = \ir f(s) \, \e^{-ist} \wrt s
\quant t \in \BR.
$$
If $f$ is a function on $\BR$, and $\la$ is in $\BR^+$,
we denote by $f^\la$ and $f_\la$ the~$\la$-dilates of $f$, defined by
\begin{equation} \label{f: dilate} 
f^\la(x)
= f(\la x)  
\qquad\hbox{and}\qquad
f_\la(x)
= \la^{-1} \, f(x/\la)  \quant x \in \BR.
\end{equation}

\noindent
For each $\nu\geq -1/2$, denote by $\cJ_\nu: \BR\setminus\{0\} \to \BC$
the modified Bessel function of order $\nu$, defined by
$$
\cJ_\nu(t) = \frac{J_\nu(t)}{t^\nu},
$$
where $J_\nu$ denotes the standard Bessel function of the first
kind and order $\nu$ (see, for instance, 
\cite[formula~(5.10.2), p.~114]{L} for the definition).  Recall that 
$$
\cJ_{-1/2} (t)
= \sqrt{\frac{2}{\pi}}\  \cos t
\qquad\hbox{and that}\qquad
\cJ_{1/2} (t)
= \sqrt{\frac{2}{\pi}}\ \frac{\sin t}{t}. 
$$
For each positive integer $\ell$,
we denote  by $\cO^\ell$ the differential operator 
$t^\ell\, D^\ell$ on the real~line.  For the proof
of the following lemma, see \cite[Lemma~4.1]{MMV2}.

\begin{lemma}\label{l: P}
For every positive integer $k$ there exists a polynomial 
$P_{k+1}$ of degree $k+1$ without constant term, such that 
\begin{equation} \label{f: propertiesRiesz I} 
\ir {f} (t) \, \cos (vt) \wrt t
=   \ir  P_{k+1}(\cO)f(t)\,  \cJ_{k+1/2} (t v)  \wrt t,
\end{equation}
for all functions $f$  such that 
$\cO^\ell f\in \lu{\BR}\cap C_0(\BR)$ for all
$\ell$ in $\{0,1,\ldots, k+1\}$.
\end{lemma}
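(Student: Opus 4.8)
The plan is to raise the Bessel index one unit at a time, from $-1/2$ to $k+1/2$, by applying a single recursion $k+1$ times. The starting observation is the elementary identity $\cos(vt)=\sqrt{\pi/2}\,\cJ_{-1/2}(vt)$, which reduces the assertion to rewriting $\ir f(t)\,\cJ_{-1/2}(vt)\wrt t$ in terms of $\cJ_{k+1/2}$.

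The engine of the argument is the following one-step formula: for every half-integer $\nu\geq-1/2$ and every $g$ with $g,\cO g\in\lu{\BR}\cap C_0(\BR)$,
\begin{equation*}
\ir g(t)\,\cJ_\nu(vt)\wrt t
= \ir \bigl[(2\nu+1)\,g(t)-\cO g(t)\bigr]\,\cJ_{\nu+1}(vt)\wrt t ,
\end{equation*}
where $\cO g(t)=t\,g'(t)$ is the first-order operator introduced before the statement. To obtain it I would start from the classical recursion $\frac{d}{dx}\bigl[x^{\mu}J_{\mu}(x)\bigr]=x^{\mu}J_{\mu-1}(x)$; rewriting it in terms of $\cJ_{\mu}(x)=x^{-\mu}J_{\mu}(x)$ and rescaling $x=vt$, $\mu=\nu+1$, one gets the pointwise identity $\frac{d}{dt}\bigl[t^{2\nu+2}\cJ_{\nu+1}(vt)\bigr]=t^{2\nu+1}\cJ_\nu(vt)$, valid on all of $\BR$ since $\cJ_\nu$ is analytic there for $\nu\geq-1/2$. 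Writing $g(t)\,\cJ_\nu(vt)=t^{-(2\nu+1)}g(t)\cdot\frac{d}{dt}\bigl[t^{2\nu+2}\cJ_{\nu+1}(vt)\bigr]$ and integrating by parts produces the operator $\cO$ out of $\frac{d}{dt}\bigl[t^{-(2\nu+1)}g(t)\bigr]$, while the factor $t^{2\nu+2}$ cancels the negative power of $t$, leaving $(2\nu+1)g-\cO g$. Every boundary term has the form $t\,g(t)\,\cJ_{\nu+1}(vt)$: it vanishes at the origin because all three factors are bounded there, and at infinity because $\cJ_{\nu+1}(vt)=O(|t|^{-\nu-3/2})$ and $g$ vanishes at infinity.

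Next I would iterate the one-step formula $k+1$ times. Since $\cO^{j}f=t^{j}D^{j}f$ and, by the product rule, $\cO(\cO^{j}f)=j\,\cO^{j}f+\cO^{j+1}f$, the span of $\cO^{0}f,\dots,\cO^{k+1}f$ is stable under the map $g\mapsto(2\nu+1)g-\cO g$; and the hypothesis $\cO^{\ell}f\in\lu{\BR}\cap C_0(\BR)$ for $\ell\in\{0,\dots,k+1\}$ guarantees that at each of the $k+1$ steps both $g$ and $\cO g$ obey the hypotheses of the one-step formula and that the final integrand lies in $\lu{\BR}$. Applying the formula successively with $\nu=-1/2,1/2,\dots,k-1/2$, starting from $\ir f(t)\cos(vt)\wrt t=\sqrt{\pi/2}\,\ir f(t)\,\cJ_{-1/2}(vt)\wrt t$, turns $f$ into a definite linear combination $P_{k+1}(\cO)f$ of $\cO^{0}f,\dots,\cO^{k+1}f$ integrated against $\cJ_{k+1/2}(vt)$. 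The coefficient of $\cO^{0}f$ is $0$: the first step has $2\nu+1=0$ for $\nu=-1/2$, so it sends $f$ to $-\cO f$, and no later step can reintroduce a zeroth-order term because $\cO$ only raises the order; the coefficient of $\cO^{k+1}f$ is $(-1)^{k+1}\neq0$ by the recursion for the leading coefficients. Hence $P_{k+1}$ has degree exactly $k+1$ and no constant term (the numerical factor $\sqrt{\pi/2}$ being absorbed into $P_{k+1}$), which is the assertion.

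The only genuinely delicate point is the behaviour at the origin in the one-step formula: the factor $t^{-(2\nu+1)}$ cannot be inserted literally when $\nu>-1/2$, so I would carry out the integration by parts on $\{|t|>\ep\}$ and let $\ep\to0$, checking that the extra boundary contributions at $\pm\ep$ (again of the form $t\,g(t)\,\cJ_{\nu+1}(vt)$) vanish, and that the post-integration integrand, being bounded near $t=0$ and in $\lu{\BR}$ away from it, lets one pass to the limit; equivalently, one verifies from the power series of $\cJ_\nu$ that $t^{2\nu+2}\cJ_{\nu+1}(vt)$ is smooth with derivative exactly $t^{2\nu+1}\cJ_\nu(vt)$ and argues by density. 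Everything else is routine bookkeeping, and the hypotheses on the $\cO^{\ell}f$ are exactly what make the $k+1$ integrations by parts legitimate and the final integral against $\cJ_{k+1/2}(vt)$ absolutely convergent.
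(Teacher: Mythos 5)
Your proof is correct, and the recursion you use is the natural one: starting from $\cos(vt)=\sqrt{\pi/2}\,\cJ_{-1/2}(vt)$, integrating by parts once raises the Bessel index by one and turns $g$ into $(2\nu+1)g-\cO g$, and iterating $k+1$ times lands on $\cJ_{k+1/2}$. The paper itself does not give a proof (it cites \cite[Lemma~4.1]{MMV2}), but the integration-by-parts/Bessel-recursion route you take is the standard one and almost certainly coincides with the cited argument.

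Two small verbal slips worth fixing, neither of which affects the argument. First, the span of $\cO^0 f,\dots,\cO^{k+1}f$ is \emph{not} literally stable under $g\mapsto(2\nu+1)g-\cO g$, since $\cO(\cO^{k+1}f)=(k+1)\cO^{k+1}f+\cO^{k+2}f$; what you actually use, and what is true, is that after $m$ steps the result lies in the span of $\cO^0 f,\dots,\cO^m f$, so $k+1$ steps never call on $\cO^{k+2}f$, and the hypothesis is exactly what is needed for each of the $k+1$ integrations by parts and for the absolute convergence of the final integral. Second, ``$\cO$ only raises the order'' should read ``$\cO$ never lowers the order'': from $\cO(\cO^j f)=j\,\cO^j f+\cO^{j+1}f$ one sees that $\cO$ both preserves and raises order; the point that matters (no $\cO^0$ term can reappear once $g_1=-\cO f$ has killed it) is still correct. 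As you note, the cleanest way to dispose of the apparent singularity of $t^{-(2\nu+1)}$ at the origin is to observe that the product $\Phi(t)=t\,g(t)\,\cJ_{\nu+1}(vt)$ is everywhere differentiable with $\Phi'(t)=g(t)\cJ_\nu(vt)+\bigl[\cO g(t)-(2\nu+1)g(t)\bigr]\cJ_{\nu+1}(vt)$, so one can integrate $\Phi'$ over $\BR$ directly without ever writing down $t^{-(2\nu+1)}$; your $\varepsilon$-truncation argument gives the same result.
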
 

Denote by $\om$ an even function in 
$C_c^\infty(\BR)$ which is supported in $[-3/4,3/4]$, is equal to~1
in $[-1/4,1/4]$, and satisfies 
$$
\sum_{j\in \BZ} \om(t-j) = 1
\quant t \in \BR.
$$
Denote by $\phi$ the function $\om^{1/4}- \om$, 
where $\om^{1/4}$ denotes the $1/4$-dilate of $\om$.  
Then $\phi$ is smooth, even and vanishes in the complement of the set
$\{t \in \BR: 1/4\leq \mod{t} \leq 4\}$.
For a fixed $R$ in $(0,1]$ and for each positive integer $i$, 
denote by $E_{i}$ the set
$\{t \in \BR: 4^{i-1}R \leq \mod{t} \leq 4^{i+1}R\}$.
Clearly $\phi^{1/(4^{i}R)}$ is supported in $E_{i}$,
and $\sum_{i=1}^\infty \phi^{1/(4^{i}R)} =~1$ in $\BR\setminus (-R,R)$.
Denote by $d$ the integer $[\!\![\log_4(3/R)]\!\!]+1$. 
To avoid cumbersome notation, we write $\rho_i$ instead of $1/(4^{i}R)$.
Then
\begin{equation} \label{f: dec om phi}
\om^{\rho_0} +  \sum_{i=1}^{d} \phi^{\rho_i} =~1 
\qquad\hbox{on}\quad [-3,3].
\end{equation}
Suppose that $c$ is in $\BR^+$, and
denote by $r$ the function defined by 
\begin{equation}\label{funzr}
r(\la) 
= \frac{1}{c^2+\la^2}
\quant \la \in \BC \setminus \{\pm ic\}.
\end{equation}
Note the decomposition
\begin{equation} \label{f: dec om hat r}
\wh{\om}* r(\lambda)= \sum_{i=0}^{d} S_i (\la),
\end{equation}
where the functions $S_i: \BR\to\BC$ are defined by
\begin{equation} \label{f: A0}
S_0(\la)
= \frac{1}{2\pi} \ir \om^{\rho_0}(t) \,\,  P_{N}(\cO)(\om\, \wh r)(t)
\,\,  \cJ_{N-1/2}(\la\, t) \wrt t 
\quant \la \in \BR,
\end{equation}
and, for $i$ in $\{1,\ldots,d\}$,
\begin{equation}  \label{f: Ai}
S_i(\la)
= \frac{1}{2\pi} \, \ir \phi^{\rho_i}(t)\, 
    P_{N}(\cO) (\om\, \wh r)(t) \, \cJ_{N-1/2}(\la t) \wrt t
\quant \la \in \BR,
\end{equation}
{where $N$ is a positive integer.}
\begin{remark} \label{rem: sigmak}
Note that there exist constants $c_\ell$ such that 
$$
t^{-1}P_{N}(\cO)
= \sum_{\ell=0}^{N-1} c_\ell \, t^\ell \, D^{\ell+1}.
$$
\end{remark}

\begin{lemma} \label{l: stima Runo}
Suppose that $N$ is a positive integer.
The following hold:
\begin{enumerate}
\item[\itemno1]
the norm
$\norm{t^{-1}P_{N}(\cO)
 \wh{r}}{\infty}$ is finite;
\item[\itemno2]
if $N\geq 3$, then there exists a constant $C$, 
independent of $R$ in $(0,1]$, such that 
$$
\sup_{\la\ge 0}\, (\la^2+1)\, \mod{S_0(\la)}
\leq C .
$$
\end{enumerate}
\end{lemma}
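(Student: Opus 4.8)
The plan is to reduce both statements to the explicit formula for $\widehat{r}$. Since $r(\la)=(c^2+\la^2)^{-1}$, a residue computation gives
$$
\widehat{r}(t)=\frac{\pi}{c}\,\e^{-c\mod{t}}
\quant t \in \BR,
$$
a continuous, exponentially decaying function that is smooth on $\BR\setminus\{0\}$, with $D^j\widehat r(t)=\tfrac{\pi}{c}(-c)^j\,\e^{-ct}$ for $t>0$ and the even reflection for $t<0$; as a distribution, each $D^j\widehat r$ differs from its classical pointwise value only by a combination of derivatives of the Dirac mass at the origin of order at most $j-2$.

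To prove \rmi\ I would use Remark~\ref{rem: sigmak} to write $t^{-1}P_{N}(\cO)\widehat r=\sum_{\ell=0}^{N-1}c_\ell\,t^\ell\,D^{\ell+1}\widehat r$. The distributional parts of $D^{\ell+1}\widehat r$ supported at the origin have order at most $\ell-1$, hence are annihilated by the factor $t^\ell$; so each summand coincides, away from $0$, with $\pm\pi\,c^\ell\,t^\ell\,\e^{-c\mod{t}}$, which is bounded on $\BR$ because $s\mapsto s^\ell\,\e^{-cs}$ is bounded on $[0,\infty)$. The $\ell=0$ summand is $-\pi\,\sgn(t)\,\e^{-c\mod{t}}$, also bounded. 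Hence $\norm{t^{-1}P_{N}(\cO)\widehat r}{\infty}<\infty$.

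For \rmii\ the crucial estimate is the pointwise bound $\bigmod{P_{N}(\cO)(\om\,\widehat r)(t)}\leq C\,\mod{t}$ for $\mod{t}\leq 3/4$, with $C$ depending only on $N$ and $c$. To see this, expand $\cO^j(\om\,\widehat r)=t^j D^j(\om\,\widehat r)$ by the Leibniz rule, so that $P_{N}(\cO)(\om\,\widehat r)$ equals $\om\cdot P_{N}(\cO)\widehat r$ plus a sum of terms each carrying a factor $D^m\om$ with $m\ge 1$. The first term is bounded by $\mod{t}\,\norm{t^{-1}P_{N}(\cO)\widehat r}{\infty}$ by \rmi. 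In the remaining terms $D^m\om$ ($m\ge 1$) is supported in $\{1/4\le\mod{t}\le 3/4\}$ and bounded there, $t^j$ is bounded on $[-3/4,3/4]$, and $D^{j-m}\widehat r$ is bounded away from the origin (its delta contributions being killed by $D^m\om$, which vanishes near $0$); hence those terms are bounded and supported in $\{\mod{t}\ge 1/4\}$, so they too are $\leq C\,\mod{t}$. Now $\om^{\rho_0}$ is supported in $[-3R/4,3R/4]\subseteq[-3/4,3/4]$ with $\mod{\om^{\rho_0}}\leq 1$, and the modified Bessel function satisfies the elementary bound $\bigmod{\cJ_{N-1/2}(s)}\leq C\,(1+\mod{s})^{-N}$ (boundedness near $0$ and the classical $s^{-\nu-1/2}$ decay at infinity). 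Therefore, for every $\la\geq 0$,
$$
\bigmod{S_0(\la)}
\leq C\int_{\BR}\mod{t}\,(1+\la\mod{t})^{-N}\wrt t
= C\,\la^{-2}\int_0^{\infty}\frac{u}{(1+u)^{N}}\wrt u
= \frac{C'}{\la^2},
$$
the last integral being finite precisely because $N\geq 3$; combined with the trivial bound $\bigmod{S_0(\la)}\leq C\int_{-3/4}^{3/4}\mod{t}\wrt t\leq C$ (from $\bigmod{\cJ_{N-1/2}}\leq C$) this gives $(\la^2+1)\,\bigmod{S_0(\la)}\leq C$ uniformly in $R\in(0,1]$.

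The step I expect to require most care is not conceptual but bookkeeping: one must check that the constant really is independent of $R$ — the role of $R$ is confined to the support $[-3R/4,3R/4]$ of $\om^{\rho_0}$, which disappears once the integrand is dominated by the $R$-free function $\mod{t}\,(1+\la\mod{t})^{-N}$ — and one must handle the lack of smoothness of $\widehat r$ at the origin. The latter is harmless because in $P_{N}(\cO)$ every $D^j$ is accompanied by a factor $t^j$ annihilating the distributions supported at $0$, so throughout one may work with the honest pointwise formulas valid for $t\neq 0$.
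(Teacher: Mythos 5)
Your proof is correct and follows the same route as the paper: the explicit formula for $\widehat r$, the cancellation of the delta contributions in $D^{\ell+1}\widehat r$ by the factor $t^{\ell}$, the decay $\mod{\cJ_{N-1/2}(s)}\le C(1+\mod{s})^{-N}$, and the $\la^{-2}$ bound coming from the dilation in the Bessel integral, which is finite precisely when $N\ge 3$. You spell out one step the paper leaves implicit, namely the Leibniz expansion showing that finiteness of $\norm{t^{-1}P_{N}(\cO)\widehat r}{\infty}$ transfers to $\norm{t^{-1}P_{N}(\cO)(\om\,\widehat r)}{\infty}$, and you get the normalisation $\widehat r(t)=(\pi/c)\,\e^{-c\mod{t}}$ right where the paper writes $1/c$; neither point affects the final estimate.
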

\begin{proof}
By Remark ~\ref{rem: sigmak},
 to prove \rmi\ it suffices to show that
\begin{equation}\label{tDr}
\sup_{t \in \BR} \mod{t^\ell \, D^{\ell+1} \wh{r}(t)}
< \infty
\quant \ell \in \{0,\ldots, N-1\}.
\end{equation}
This is a standard estimate in Fourier analysis.  
Recall that $\wh{r}(t) = (1/c)\, \e^{-c\mod{t}}$.  
It is straightforward to check that $D\wh r = -c\, \wh r\,\, \sgn$,
and that for each $k\geq 1$
$$
\begin{aligned}
D^{2k} \wh{r} 
=&\  c^{2k} \, \wh r - 2 \, \sum_{j=0}^{k-1} c^{2(k-1-j)} \, D^{2j} \de_0
\\
D^{2k+1} \wh{r} 
=&\, - c^{2k+1} \,  \wh r \cdot \sgn - 2 \, 
     \sum_{j=0}^{k-1} c^{2(k-1-j)}\, D^{2j+1} \de_0.
     \end{aligned}
$$
Hence
$$
t^{2k-1}\, D^{2k} \wh{r}(t) 
= c^{2k}\, t^{2k-1}\, \wh r(t)
\quad\hbox{and}
\quad
t^{2k}\, D^{2k+1} \wh{r} (t)
= - c^{2k+1}\,  t^{2k} \, \sgn(t)\,  \cdot \wh r(t),
$$
so that 
$$
\mod{t^\ell\, D^{\ell+1} \wh{r}(t)}
= c^{\ell} \, \mod{t}^\ell\, \e^{-c\mod{t}}
\quant t \in \BR,
$$
and the required estimate follows.

To prove \rmii, observe that, on the one hand, by (\ref{f: A0}) and (\ref{tDr})
$$
\begin{aligned}
\mod{S_0(\la)}
& \leq \norm{\om}{\infty} \, \norm{
t^{-1}P_{N}(\cO)
(\om\, \wh r)}{\infty}
    \ir \mod{t}\, \mod{\cJ_{N-1/2} (t \la)} \wrt t \\ 
& \leq  \, C\,\norm{\om}{\infty} \,
  \,\, \la^{-2} 
\quant \la \in [0,\infty). 
\end{aligned}
$$
On the other hand, the function $\cJ_{N-1/2}$ is bounded, so that
$$
\begin{aligned}
\mod{S_0(\la)}
& \leq C\, \norm{t^{-1}P_{N}(\cO)
 (\om\, \wh r)}{\infty} 
    \ir \om^{\rho_0}(t) \,\mod{t} \wrt t \\ 
& \leq C 
\quant \la \in [0,\infty). 
\end{aligned}
$$
We have used the fact that $\rho_0 = 1/R$ and $R\leq 1$ in the
last inequality.
The proof of the lemma is complete. 
\end{proof}

The third, and last, ingredient in the proof of Lemma \ref{l: dec resolvent} is the following proposition, which shows that certain functions of the operator $\cL$ map $H^1$-atoms
into functions that have integral $0$.
For technical reasons, it is convenient to work with functions of the wave propagator 
$$
\cD_1=\sqrt{\cL-b+\kappa^2}
$$
instead of functions of $\cL$. 
We recall that $-\kappa^2$ is the greatest lower 
bound of the Ricci curvature (see Basic assumptions~\ref{Ba: on M}). 
The  reason for considering the operator $\cD_1$ instead of  
$\cD=\sqrt{\cL-b}$ is that, in order to prove estimates of 
the gradient of the kernels associated to functions of $\cL$,  
we need to exploit the identity $\wrt\cL=\mathbb{L} \wrt$, 
where $\mathbb{L}$ is the Hodge Laplacian $\mathbb{L}$ on $1$-forms 
and $\wrt$ denotes exterior differentiation (see \cite[Prop. 5.5]{MMV1}). 
Whereas, in general,  the operator $\mathbb{L}-b$ is not a 
positive operator on $1$-forms, the operator 
$\mathbb{L}-b+\kappa^2$ is nonnegative on manifolds whose 
Ricci curvature satisfies the lower bound $\Ric\ge -\kappa^2$ .

\begin{proposition}  \label{p: Mean}
Suppose that $\nu$ is in $[-1/2,\infty)$, that $w$ is 
a complex measure on $\BR$ 
and that $a$ is an $H^1$-atom.  Define the operator $\cW_\nu(\cD_1)$
on $\ld{M}$ spectrally by
$$
\cW_{\nu}(\cD_1) f 
= \ir 
 \, \cJ_{\nu} (t\cD_1) f 
 \wrt w(t)
\quant f \in \ld{M}.
$$
The following hold:
\begin{enumerate}
\item[\itemno1]
$\int_M \cW_\nu(\cD_1) a \wrt \mu = 0$;
\item[\itemno2]
$\int_M S_i(\cD_1) a \wrt \mu = 0$ for $i=0,\ldots,d$ 
(the functions $S_i$ are defined in (\ref{f: A0}) and (\ref{f: Ai}).
\end{enumerate}
\end{proposition}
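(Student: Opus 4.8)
The plan is to prove (i) first and then deduce (ii), since each $S_i(\cD_1)$ is an operator of exactly the form $\cW_\nu(\cD_1)$ with $\nu = N-1/2$ and $w$ the measure with density $\frac{1}{2\pi}\om^{\rho_0}(t)\,P_N(\cO)(\om\,\wh r)(t)$ (respectively $\frac{1}{2\pi}\phi^{\rho_i}(t)\,P_N(\cO)(\om\,\wh r)(t)$); indeed $\cW_{N-1/2}(\cD_1)$ applied to $a$ reproduces the defining integral for $S_i(\cD_1)a$. So (ii) is immediate from (i) once one checks that these densities are integrable, which follows from Lemma~\ref{l: stima Runo}~\rmi\ (the function $t^{-1}P_N(\cO)\wh r$, hence $P_N(\cO)\wh r$ on the compact support of $\om$ or $\phi^{\rho_i}$, is bounded) together with the compact support of $\om^{\rho_0}$ and $\phi^{\rho_i}$.

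For (i), the key point is the finite propagation speed of the wave equation. First I would note that $a$, being an $H^1$-atom supported in a ball $B$, has $\int_M a\wrt\mu = 0$; writing $\cW_\nu(\cD_1)a$ as a superposition $\ir \cJ_\nu(t\cD_1)a\wrt w(t)$ and using that $\int_M$ commutes with the $w$-integral (justified by Fubini, since $w$ is a finite complex measure and $a\in\ld{M}$), it suffices to show $\int_M \cJ_\nu(t\cD_1)a\wrt\mu$ is independent of $t$ and equals its value at $t=0$. The natural way is to differentiate in $t$: using the Bessel recursion, $\frac{d}{dt}\cJ_\nu(t\cD_1) = -t\,\cD_1^2\,\cJ_{\nu+1}(t\cD_1)$ up to a constant, so $\frac{d}{dt}\int_M \cJ_\nu(t\cD_1)a\wrt\mu = -ct\int_M \cD_1^2\,\cJ_{\nu+1}(t\cD_1)a\wrt\mu$. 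Now $\cD_1^2 = \cL - b + \kappa^2$, and $\int_M \cL g\wrt\mu = 0$ for $g$ in a suitable dense class because $\cL = \nabla^*\nabla$ annihilates constants; while $\int_M g\wrt\mu = (g,\One)$ and one uses self-adjointness of $\cJ_{\nu+1}(t\cD_1)$ together with $\cJ_{\nu+1}(t\cD_1)$ mapping into $\Dom(\cD_1^2)$ to transfer the operator onto $\One$. The cleaner route is probably to test directly: $\int_M \cJ_\nu(t\cD_1)a\wrt\mu = (\cJ_\nu(t\cD_1)a,\One_{B'})$ for $B'$ a large ball containing the support, and for $\nu = -1/2$ this is $\sqrt{2/\pi}\,(\cos(t\cD_1)a,\One)$, which by self-adjointness of $\cos(t\cD_1)$ and the fact that $\cos(t\cD_1)$ has finite propagation speed equals $\sqrt{2/\pi}\,(a,\cos(t\cD_1)\One)$; since $\One$ is (locally) annihilated by $\cL$ and $\cos$ is even, $\cos(t\cD_1)\One$ is constant on a neighbourhood of the support of $a$ — here one must be careful that $\cD_1^2\One = (\kappa^2-b)\One$ only locally, but finite propagation makes this local computation legitimate — so the inner product is a multiple of $\int_M a\wrt\mu = 0$. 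For general $\nu$, I would reduce to the case $\nu=-1/2$ via the subordination/integral formula expressing $\cJ_\nu$ as an average of cosines (the Sonine–Poisson representation $\cJ_\nu(t) = c_\nu\int_{-1}^1 (1-s^2)^{\nu-1/2}\cos(st)\wrt s$ for $\nu > -1/2$), which writes $\cJ_\nu(t\cD_1)$ as a superposition of $\cos(st\cD_1)$ and hence reduces the claim to the already-handled case.

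The main obstacle is making the finite-propagation-speed argument rigorous at the level of $\ld{M}$ distributions: $\cos(t\cD_1)\One$ is a well-defined object only via the spectral theorem applied after localization, since $\One\notin\ld{M}$. The way around this is to test against $a$ from the start — $\cW_\nu(\cD_1)a\in\ld{M}$ is genuine, and one pairs it with a smooth cutoff $\phi$ equal to $1$ on a neighbourhood of $\mathrm{supp}\,a$ enlarged by the (finite) propagation radius $\sup\{|t|: t\in\mathrm{supp}\,w\}$, so that $\int_M \cW_\nu(\cD_1)a\wrt\mu = (\cW_\nu(\cD_1)a,\phi)$; then transfer $\cW_\nu(\cD_1)$ onto $\phi$ by self-adjointness, and observe that on the support of $a$ the function $\cW_\nu(\cD_1)\phi$ is constant because finite propagation speed confines the computation to the region where $\cL\phi = 0$, where one can explicitly solve the relevant ODE in $t$ and see the answer is $t$-independent and matches $\nu=0$ value $\phi \equiv 1$ there. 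This reduces everything to $(a, \text{const}) = \int_M a\wrt\mu = 0$. I would state the finite-propagation input as a lemma (or cite \cite{MMV1} or a standard reference such as Cheeger–Gromov–Taylor) rather than reprove it.
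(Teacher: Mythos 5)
Your proposal follows essentially the same route as the paper: reduce (ii) to (i) by observing that each $S_i(\cD_1)$ is of the form $\cW_\nu(\cD_1)$ with a compactly supported density; reduce (i) by Fubini to the per-$t$ vanishing $\int_M \cJ_\nu(t\cD_1)a\,\wrt\mu = 0$; and deduce the latter from the finite speed of propagation of the wave group of $\cL - b + \kappa^2$. The paper simply cites \cite[Prop.~5.5]{MMV2} for the per-$t$ vanishing and applies Fubini, while you spell out the finite-propagation argument (cutoff, self-adjointness, local constancy of $\cJ_\nu(t\cD_1)\phi$ near $\mathrm{supp}\,a$, Sonine--Poisson reduction to the cosine case) — correct in substance, though you should use the per-$t$ cutoff version rather than a single global cutoff, since the measure $w$ in the statement need not be compactly supported.
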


\begin{proof} 
A simple argument, based on the finite speed of propagation property 
of the operator  $\cL-b+\kappa^2$, shows that 
\begin{equation}\label{}
\int_M\cJ_\nu(t\cD_1)\,a\wrt\mu =0.
\end{equation}
(see \cite[Prop. 5.5]{MMV2}). Since $\cW_\nu$ 
and the function $S_i$ are integrals of 
$\cJ_\nu(t\cdot)$ with respect to complex measures, 
we obtain the desired conclusion by interchanging the order of integration.
\end{proof}

\begin{remark}\label{r: fps}
Note that for every $\nu$ in $[-1/2,\infty)$
the function $\la\mapsto\cJ_\nu(t\la)$ is even and of
entire of exponential type $t$, so that 
kernel $k_{\cJ_\nu (t\cD_1)}$ of the operator 
$\cJ_\nu(t\cD_1)$ 
is supported in 
the set $\{(x,y) \in M\times M:
d(x,y) \leq t\}$ by the finite propagation speed.
\end{remark}

\noindent
The main step in the proof of our main result
is Lemma~\ref{l: dec resolvent}, which we restate for the 
reader's convenience.  {The idea, used in the proof, of subordinating 
spectral functions of $\cL$ to the wave propagator 
has been used several times
since its appearance in \cite{CGT,Ta}}.

\begin{lemma*}[{\bf 4.2}]
Suppose that $k$ is a positive integer and that 
$M$ has $C^k$ bounded geometry.
Let
$A$ be an admissible $X^{k-1}$-atom.  Then $\Jbekappa A$ is 
in $\Xhat{M}$, and 
there exists a constant $C$, independent of $A$, such that
$$ 
\norm{\Jbekappa A}{\Xkat}
\leq C.
$$
\end{lemma*}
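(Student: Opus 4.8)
The plan is to reduce the statement to a quantitative decomposition of the single function $\Rbekappa a=r(\cD_1)a$ attached to an $H^1$-atom $a$, and then to produce that decomposition by subordinating $\Rbekappa$ to the wave propagator $\cos(t\cD_1)$, cutting in the time variable at dyadic scales, and feeding the resulting pieces into Lemma~\ref{l: economical decomposition I} (equivalently Remark~\ref{c: large atoms}). Here $r$ is the function (\ref{funzr}) with parameter $c_0:=\sqrt{4\be^2+b}$, so that $r(\cD_1)=(c_0^2+\cD_1^2)^{-1}=\Rbekappa$; note that $c_0>2\be$, a margin over the volume growth rate in (\ref{f: volume growth}) which will be used, through the decay $\wh r(t)=c_0^{-1}\e^{-c_0\mod t}$, in treating the pieces that live at large scales.

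For the reduction, let $A$ be an admissible $X^{k-1}$-atom, supported in a ball $B=B(c_B,R)$ with $R\le1$. By the $X^{k-1}$-analogue of Remark~\ref{rem: 12h atomi}, the function $a:=\cL^{-(k-1)}A/c$, with $c:=\opnorm{\cL^{-(k-1)}}{2}$, is an $H^1$-atom supported in $\oBar B$; thus $A=c\,\cL^{k-1}a$, and since $\cU_\si\,\cL^{-1}=(\si I+\cL)^{-1}$ we get $\cL^{-k}\Jbekappa A=\Rbekappa\,\cL^{-(k-1)}A=c\,r(\cD_1)a$. Hence it suffices to write $r(\cD_1)a=\sum_{i\ge0}g_i$, with every $g_i$ in $\ld M\cap\Dom(\cL^k)$, supported in a ball $B_i$, with $\int_M g_i\wrt\mu=0$, and with $\sum_{i\ge0}(1+r_{B_i})\,\mu(B_i)^{1/2}\,\norm{\cL^k g_i}{2}\le C$ independently of $A$: for then $\cL^{-k}(\cL^k g_i)=g_i\in\ldO{B_i}$, so Proposition~\ref{p: canc II}\,\rmi\ shows that $\cL^k g_i$ is orthogonal to the $k$-quasi-harmonic functions on $\oBar{B_i}$, Remark~\ref{c: large atoms} gives $\norm{\cL^k g_i}{\Xkat}\le C(1+r_{B_i})\mu(B_i)^{1/2}\norm{\cL^k g_i}{2}$, and summing yields $\Jbekappa A=c\,\cL^k\!\bigl(r(\cD_1)a\bigr)=c\sum_i\cL^k g_i\in\Xhat M$ with the required bound.

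To build the decomposition, start from $r(\cD_1)=\frac{1}{2\pi}\ir\wh r(t)\cos(t\cD_1)\wrt t$, cut $\wh r$ by a partition of unity subordinate to the scales $R,4R,16R,\dots$ (the $\om^{\rho_0}$ and $\phi^{\rho_i}$ of Section~\ref{s: proof of Lemma}), and use Lemma~\ref{l: P} to replace $\cos(t\cD_1)$ by the smoother $\cJ_{N-1/2}(t\cD_1)$ ($N$ a fixed large integer) at the price of applying $P_N(\cO)$ in $t$; this yields $r(\cD_1)a=\sum_{i\ge0}g_i$ with $g_i=S_i(\cD_1)a$, the $S_i$ being of the form (\ref{f: A0})--(\ref{f: Ai}) and carrying in addition the weight $\e^{-c_0\mod t}\le\e^{-c_0 4^{i-1}R}$ of $\wh r$ on the support $\mod t\sim4^iR$ when $4^iR\gtrsim1$ (in Section~\ref{s: proof of Lemma} the split is organised slightly differently, through $\om\wh r$ and $(1-\om)\wh r$, but the substance is the same). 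By finite propagation speed (Remark~\ref{r: fps}), $g_i$ is supported in the ball $B_i$ centred at $c_B$ of radius $\lesssim4^iR$; by Proposition~\ref{p: Mean}, $\int_M g_i\wrt\mu=0$; and since $S_i(\la)=O\bigl((1+\la)^{-2}\bigr)$ (Lemma~\ref{l: stima Runo}\,\rmii\ for the piece at the smallest scale, and the sharper $O\bigl((1+\la)^{-N}\bigr)$ for the others, where $\cJ_{N-1/2}(\la t)\lesssim(\la t)^{-N}$ away from $t=0$) the operator $\cL\,S_i(\cD_1)$ is bounded on $\ld M$, so $\cL^k g_i=\cL\,S_i(\cD_1)\bigl(\cL^{k-1}a\bigr)\in\ld M$.

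It remains to estimate $\norm{\cL^k g_i}{2}=c^{-1}\norm{\cL\,S_i(\cD_1)A}{2}$, and this is the heart of the matter. Using the cancellation $\int_B A\wrt\mu=0$ and the fact that $A$ lives in a ball of radius $R$, if $K_i$ is the kernel of $\cL\,S_i(\cD_1)$ then $\mod{\cL\,S_i(\cD_1)A(x)}\le R\,\sup_{y\in B}\mod{\nabla_y K_i(x,y)}$ for $x\in B_i$ (because $\norm{A}{1}\le\mu(B)^{1/2}\norm{A}{2}\le1$), so everything comes down to the pointwise gradient bound $\sup_{y\in B}\mod{\nabla_y K_i(x,y)}\lesssim(4^iR)^{-1}\mu(B_i)^{-1}$ when $4^iR\lesssim1$, with an extra factor $\e^{-c_0 4^{i-1}R}$ when $4^iR\gtrsim1$. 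This follows from $\ld M$-estimates on $\nabla\,\cL\,S_i(\cD_1)$ — the extra $(4^iR)^{-1}$ because $S_i$ is localised at frequency $\sim(4^iR)^{-1}$ — together with finite propagation speed, which confines $K_i$ to $\{d(x,y)\lesssim4^iR\}$ and thereby supplies the factor $\mu(B_i)^{-1}$; it rests on the identity $\wrt\cL=\mathbb{L}\,\wrt$ on $1$-forms (see \cite[Prop.~5.5]{MMV1}), on the nonnegativity of $\mathbb{L}-b+\kappa^2$ (the reason the propagator of $\cD_1$ rather than of $\cD$ is used), and on the $C^k$ bounded geometry, which is what permits commuting $\cL^k$ past $\nabla$ at the cost of curvature terms of order $\le k$. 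Granting it, $\norm{\cL^k g_i}{2}\le\mu(B_i)^{1/2}\norm{\cL^k g_i}{\infty}\lesssim4^{-i}\mu(B_i)^{-1/2}$ when $4^iR\lesssim1$ and $\lesssim4^{-i}\e^{-c_0 4^{i-1}R}\mu(B_i)^{-1/2}$ otherwise, so $(1+r_{B_i})\mu(B_i)^{1/2}\norm{\cL^k g_i}{2}\lesssim4^{-i}$ in the first regime (where $r_{B_i}\lesssim1$) and $\lesssim\e^{-c_0 4^{i-1}R}$ in the second; both series converge, to a bound independent of $A$ — in particular of $R$, the $\simeq\log_4(1/R)$ small-scale terms being absorbed by the geometric factor $4^{-i}$. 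The only real difficulty is the kernel gradient estimate just invoked, with its precise joint dependence on $4^iR$, on $\mu(B_i)$ and, at large scales, on the weight from $\wh r$; establishing it — through $\ld M$ bounds for the operators $\nabla^\ell\cL^k S_i(\cD_1)$ and the Weitzenböck identity under $C^k$ bounded geometry — is what occupies most of Section~\ref{s: proof of Lemma}.
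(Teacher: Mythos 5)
Your proposal reproduces the skeleton of the paper's argument: subordinate $\Rbekappa$ to the Bessel propagator $\cJ_{N-1/2}(t\cD_1)$ via Lemma~\ref{l: P}, decompose $\wh r$ in the time variable, use finite propagation speed (Remark~\ref{r: fps}) and Proposition~\ref{p: Mean} for support and cancellation of each piece, and feed the pieces into Lemma~\ref{l: economical decomposition I} (equivalently Remark~\ref{c: large atoms}) to pass from $\ld{}$ bounds to $\Xkat$ bounds. The preliminary reduction, namely decomposing $r(\cD_1)a$ with $a=\cL^{-(k-1)}A/\opnorm{\cL^{-(k-1)}}{2}$ and recovering $\Jbekappa A = \opnorm{\cL^{-(k-1)}}{2}\,\cL^k\bigl(r(\cD_1)a\bigr)$, is a valid reorganisation, and you correctly identify why $\cD_1$ replaces $\cD$ and where $C^k$ bounded geometry enters.

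There are, however, two genuine gaps.

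First, the single base-$4$ dyadic partition over \emph{all} scales is not equivalent to the paper's two-stage split, and would in fact fail at large scales. For the $i$th dyadic piece the time interval is $E_i=\{4^{i-1}R\le\mod t\le4^{i+1}R\}$, so finite propagation speed puts its support in a ball $B_i$ of radius $\sim4^{i+1}R$, a factor $16$ larger than the left edge $4^{i-1}R$ at which the weight $\wh r(t)=c_0^{-1}\e^{-c_0\mod t}$ begins to decay. When $4^iR\gtrsim1$, (\ref{f: volume growth}) gives $\mu(B_i)^{1/2}\lesssim\e^{\be\cdot4^{i+1}R+o(4^iR)}=\e^{16\be\cdot4^{i-1}R+o(4^iR)}$, while the only exponential decay available is $\e^{-c_0\cdot4^{i-1}R}$ with $c_0=\sqrt{4\be^2+b}$; since $b\le\be^2$ by Brooks' theorem, $c_0\le\sqrt5\,\be<16\be$, and the product $\e^{-c_0\cdot4^{i-1}R}\mu(B_i)^{1/2}$ diverges. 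So the sum $\sum_i(1+r_{B_i})\mu(B_i)^{1/2}\norm{\cL^k g_i}2$ cannot converge if $\norm{\cL^k g_i}2$ is estimated by the $\ld{}$-in-$x$ tools the paper cites. This is exactly why Step~III of the paper's proof switches to unit-width time intervals $\om_j$ centred at integers $j\ge1$: there the ratio (support radius)$/$(left edge of time interval) is $(j+1)/(j-3/4)\to1$, so $c_0>2\be>\be$ is enough margin and the series $\sum_j j\,\e^{-\vep j}$ converges. Your remark that "the split is organised slightly differently, but the substance is the same" glosses over this: the two-stage split is essential.

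Second, your estimate of $\norm{\cL^k g_i}2$ routes through a pointwise ($L^\infty$) gradient kernel bound $\sup_{y\in B}\mod{\nabla_y K_i(x,y)}\lesssim(4^iR)^{-1}\mu(B_i)^{-1}$ followed by $\norm{\cL^k g_i}2\le\mu(B_i)^{1/2}\norm{\cL^k g_i}\infty$. It is precisely this $\mu(B_i)^{-1}$ in an $L^\infty$ bound that makes the volume factor cancel and hides the first gap. But such an $L^1\to L^\infty$ bound on the gradient kernel is not what the cited ingredients provide: \cite[Prop.~2.2]{MMV1} gives $\sup_z\bignorm{\dest_2 k_{\cJ(t\cD_1)}(\cdot,z)}{\ld{B}}$, an $\ld{}$-in-$x$ estimate, and Remark~\ref{ultra} gives $L^1\to L^2$ and $L^1\to L^\infty$ ultracontractivity of the heat semigroup, not of the wave propagator's gradient. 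The paper never leaves $\ld{}$: it estimates $\norm{S_i(\cD_1)A}2$ and $\norm{\cL S_i(\cD_1)A}2$ by Minkowski's inequality directly from the $\ld{}$-in-$x$ gradient kernel bounds, and at large scales replaces the dyadic partition by the unit-scale one so that the $\ld{}$ route closes. The pointwise assertion is a strictly stronger claim needing its own proof, and it is unnecessary once the partition is fixed.

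With the two-stage split restored and the argument kept in $\ld{}$, the rest of your outline matches the paper.
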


\begin{proof}  
Suppose that the atom $A$ is supported in the ball 
$B(p,R)$. Then $R\leq 1$, because $A$ is admissible.
Denote by $N$ an integer $>n/2+3$.
For notational convenience, in this proof 
we shall write $\cJ$ instead of $\cJ_{N-1/2}$,
$\cR$ instead of $\Rbekappa$, $\cU$ instead of $\Jbekappa$ 
and $c$ instead of $\sqrt{4\be^2+b }$.
Observe that $\cR = r(\cD_1)$ (the function $r$ was defined in (\ref{funzr})).
\par
\emph{Step I: splitting of the operator}.
Define the operators~$\cS$ and $\cT$ spectrally~by 
\begin{equation} \label{f: decomposition R}
\cS
= (\wh\om\ast r) (\cD_1)
\qquad\hbox{and}\qquad
\cT 
= (r-\wh\om\ast r) (\cD_1).
\end{equation}
Then
$\cU A=\cL
\cR A
=\cL \cS A+\cL \cT A.
$
We shall prove that both $\cL\cS A$ and $\cL\cT  A$ 
are in $\Xhat{M}$ and that there exists a constant $C$, independent
of $A$, such that 
\begin{equation} \label{f: required atomic}
\norm{\cL\cS A}{\Xkat}
\leq C
\qquad\hbox{and}\qquad
\norm{\cL\cT  A}{\Xkat}
\leq C.
\end{equation}
The proof of  estimates (\ref{f: required atomic}) 
will be given in Steps~II and III.

\emph{Step II:  proof of the first inequality 
in (\ref{f: required atomic})}.
Note that 
$\om\,\wh r$ has support in $[-3/4,3/4]$.
Define the functions $S_i$ as in (\ref{f: A0}) and
(\ref{f: Ai}).
Observe that, by 
(\ref{f: dec om hat r}),
\begin{equation}\label{f:S=sumSi}
\cS
= \sum_{i=0}^{d} S_i(\cD_1),
\end{equation}
where $d=[\!\![\log_{4}(3/R)+1]\!\!]$.
Denote by $B_i$ the ball with centre~$p$ and radius $(4^{i+1}+1)R$.
Since the support of the kernel of the operator $S_i(\cD_1)$ 
is contained in $\{(x,y): d(x,y) \leq 4^{i+1}R\}$ by the finite
propagation speed, the function $S_i(\cD_1)A$ is 
supported in $B_i$.

Now we check that $\cL S_i(\cD_1)A$ is in $\bigl(Q_{B_i}^k\bigr)^{\perp}$. 
By Proposition~\ref{p: canc II} it suffices to show 
that $\cL^{-k}\cL S_i(\cD_1)A$ is in $L^2_0(B_i)$. 
Now, $\cL^{-k}\cL S_i(\cD_1)A=S_i(\cD_1)\cL^{1-k}A$ and $\cL^{1-k}A$ 
is a constant multiple of 
a $H^1$-atom with support contained in $B(p,R)$
by Remark~\ref{rem: 12h atomi}. Thus the support of $S_i(\cD_1)\cL^{1-k} A$ 
is contained in $B_i$  and its integral  
over $M$ vanishes by Proposition~\ref{p: Mean}~\rmii.

Next, we claim that there exists a constant $C$, independent
of $A$, such that for  $i$ in $\{0,\ldots,d\}$
\begin{equation} \label{f: at size est I}
\norm{S_i(\cD_1)A}{2}
\leq C \,  
\, \mu(B_i)^{-1/2} \,4^{-i} 
\end{equation}
and
\begin{equation} \label{f: at size est II}
\norm{\cL S_i(\cD_1)A}{2}
\leq C\, 
\, \mu(B_i)^{-1/2}\,4^{-i}.
\end{equation}
Deferring momentarily the proof of the claim, we show that the 
first inequality in (\ref{f: required atomic}) follows from it. 
Indeed, by (\ref{f:S=sumSi}) and the triangle inequality,
$$
\norm{\cL\cS A}{\Xkat}
\leq C\, \sum_{i=0}^{d} \norm{\cL S_i(\cD_1) A}{\Xkat}.
$$
Now Remark~\ref{c: large atoms} and (\ref{f: at size est II})  
imply that 
$$
\begin{aligned}
\norm{\cL S_i(\cD_1) A}{\Xkat}
& \leq C \, \mu(B_i)^{1/2}\, \norm{\cL S_i(\cD_1) A}{2}  \\ 
& \leq C \,  4^{-i}. 
\end{aligned}
$$
Hence
$$
\norm{\cL\cS A}{\Xkat}
\leq C,
$$
as required to prove the first inequality in  (\ref{f: required atomic}).

To conclude the proof of Step~II it remains to prove 
(\ref{f: at size est I}) and (\ref{f: at size est II}). 
The function $S_0$ is bounded by Lemma~\ref{l: stima Runo} 
hence $S_0(\cD_1)$ is bounded on $\ld{M}$ by the spectral theorem, and
$$
\opnorm{S_0(\cD_1)}{2}\
\leq \norm{S_0}{\infty}.
$$
Since $S_{0}(\cD_1)A$ is supported in $B_0=B(p,5R)$, we have
$$
\norm{S_{0}(\cD_1)A}{2}
\leq 
 \opnorm{S_{0}(\cD_1)}{2}\, \norm{A}{2} 
\leq C\, R^{-n/2}.
$$
Furthermore, the integral of $S_0(\cD_1)A$ vanishes
by Proposition~\ref{p: Mean}~\rmii, 
so that $S_0(\cD_1)\,A$ is a constant multiple of an $H^1$-atom.

Denote by $k_{S_i(\cD_1)}$ the integral kernel of the
operator $S_i(\cD_1)$.  Observe that
$$
S_i(\cD_1)\, A(x)
= \int_{B(p,R)} A(y) \, \bigl[k_{S_i(\cD_1)}(x,y)
   -  k_{S_i(\cD_1)}(x,p)\bigr] \wrt \mu(y).
$$
By Minkowski's integral inequality
and the fact that the support of $S_i(\cD_1)\, A$
is contained in $B_i$, we have that
$$
\begin{aligned}
\norm{S_i(\cD_1)\, A}{2} 
& = \norm{S_i(\cD_1)\, A}{\ld{B_i}}  \\
& \leq \int_{B(p,R)} \mod{A(y)} \, I_{i}(y) \wrt \mu(y),
\end{aligned}
$$
where
$$
I_{i}(y) 
= \norm{k_{S_i(\cD_1)}(\cdot,y) 
- k_{S_i(\cD_1)}(\cdot,p)}{L^2(B_i)}
\quant y \in B(p,R).
$$
To estimate $I_{i}(y)$, we observe that 
$$
I_{i}(y)
\leq  d(y,p) \, \sup_{z\in M} \, \bignorm{\dest_2 
    k_{S_i(\cD_1)}(\cdot,z)}{\ld{B_{i}}},
$$
and, by 
(\ref{f: Ai}) and (\ref{f: decomposition R}),
$$
\dest_2 
    k_{S_i(\cD_1)}(\cdot,z)=\frac{1}{2\pi} \, \ir \phi^{\rho_i}(t)\, 
   P_{N}(\cO)(\om\wh{r})
   (t) \  \dest_2 
    k_{\cJ(t\cD_1)}(\cdot,z) \wrt t.
$$
Recall that $\phi^{\rho_i}$ is supported in $E_i
= \{ t\in \BR: 4^{i-1} R \leq \mod{t} \leq 4^{i+1}R \}$, 
that the support of $\om\wh{r}
   $
   is contained in $[-1,1]$ and that $d(p,y)<R$.
Then, by \cite[Prop.~2.2~\rmiii]{MMV1} (with $\cJ$
in place of $F$), there exists a constant $C$, independent of $i$
and $R$, such that
$$
\begin{aligned}
I_{i}(y)
& \leq C \, d(y,p) \, 
    \ir \phi^{\rho_i}(t)\, \mod{P_{N}\cO)
    (\om\wh{r})(t) }
\, \,
    \sup_{z\in M} \bignorm{\dest_2
     k_{\cJ (t\cD_1)}(\cdot,z)}{\ld{B_{i}}} \wrt t \\
& \leq C \, \norm{t P_{N}(\cO)
(\om\wh{r}) }
{\infty} \,R \,    
     \int_{E_{i}} \mod{t}^{-n/2-2}   \wrt t \\
& \leq C \, 
\, R \, (4^iR)^{-n/2-1} \,.
\end{aligned}
$$
Thus,
$$
\begin{aligned}
\norm{S_i(\cD_1)\, A}{2}
& \leq C\,
\, 4^{-i}\, 
      (4^i R)^{-n/2} \, \norm{A}{1} \\
& \leq C\, 
\, 
      4^{-i}\, \mu(B_i)^{-1/2}.
\end{aligned}
$$
This concludes the proof of (\ref{f: at size est I}).
Now we prove (\ref{f: at size est II}).  Recall that
$
\cL = \cD^2 + b\, \cI = \cD_1^2 + (b-\kappa^2) \, \cI.
$  
Therefore
\begin{equation} \label{f: stima LRunoa} 
\norm{\cL S_i(\cD_1) A}{2}
\leq \norm{\cD_1^2 S_i(\cD_1) A}{2} + \mod{b-\kappa^2} 
\, \norm{S_i(\cD_1) A}{2}.
\end{equation}
We first estimate $\norm{\cD_1^2 S_i(\cD_1) A}{2}$ 
when $i$ is in $\{1,\ldots,d\}$.  
Observe that 
$$
\cD_1^2\, S_i(\cD_1)
=\frac{2^{N-1}}{\sqrt{2\pi}} \ir \frac{\phi^{\rho_i}(t)}{t^2}\, 
P_N(\cO) (\om \, \wh r)(t)\, F (t \cD_1) \wrt t,
$$
where $F(\lambda)=\lambda^2 \cJ(\lambda)$.
Since the function $\la\mapsto F(t\la)$ is an even entire function of 
exponential type $\mod{t}$ and the support of 
$\phi^{\rho_i}$ is contained in the set  {$E_i$}, 
the  support of $F(t\cD_1)\, A$ is contained in $B_i$, 
by finite propagation speed. Thus
$$
F(t\cD_1)\, A(x)
= \int_{B(p,R)} A(y) \, \bigl[k_{F(t\cD_1)}(x,y)
   -  k_{F(t\cD_1)}(x,p)\bigr] \wrt \mu(y),
$$
and, by Minkowski's integral inequality,
$$
\begin{aligned}
\norm{F(t\cD_1)\, A}{2} 
& = \norm{F(t\cD_1)\, A}{\ld{B_i}} \\
& \leq \int_{B(p,R)} \mod{A(y)} \, I_{i}(y) \wrt \mu(y) \\
& \leq \norm{A}{1} \,\sup_{y\in B(p,R)}  I_{i}(y),
\end{aligned}
$$
where 
$$
I_{i}(y) 
=\norm{k_{F(t\cD_1)}(\cdot,y) 
- k_{F(t\cD_1)}(\cdot,p)}{\ld{B_i}}\quant y \in B(p,R).
$$
Observe that 
$$
I_{i}(y)
\leq  d(y,p) \,\, \sup_{z\in M} \, \bignorm{\dest_2 
    k_{F(t\cD_1)}(\cdot,z)}{\ld{B_i}}.
$$
Since
$
\sup_{\la \in \BR^+} (1+\la)^{N-2}\, \mod{F(\la)} < \infty
$
by the asymptotics of Bessel functions of the first kind and
$N-2 > n/2+1$ by assumption, 
we may use \cite[Prop.~2.2~\rmiii]{MMV1},
and conclude that 
$$
\sup_{z\in M} \, \norm{ \dest_2 k_{F(t\cD_1)}(\cdot,z)}{\ld{B_i}}
\leq C \, \mod{t}^{-n/2-1}
\quant t \in [-1,1].
$$
Therefore, since the support of $\phi^{\rho_i}$ 
is contained in the set {$E_i$},
$$
\begin{aligned}
\norm{\cD_1^2 S_i(\cD_1) A}{2}
& \leq C \, R \, \ir \frac{\phi^{\rho_i}(t)}{t^2}\, 
     \mod{P_N(\cO) (\om \, \wh r)(t)}\,  
    \sup_{z\in M} \bignorm{\dest_2
     k_{F(t\cD_1)}(\cdot,z)}{\ld{B_i}} \wrt t \\
& \leq C \,\norm{
t^{-1}P_N(\cO)
(\om \, \wh r)}{\infty} \, 
     \frac{R}{(4^{i}R)^{n/2+3}} \ir \phi^{\rho_i}(t) \, \mod{t} \wrt t \\
& \leq C \,     
     4^{-i} \, \mu(B_i)^{-1/2}
\quant i \in \{1,\ldots,d\}.
\end{aligned}
$$
Now, the inequality (\ref{f: at size est II}) for $i\in \{1,...,d\}$ follows directly  
fr{}om this, (\ref{f: stima LRunoa}) and (\ref{f: at size est I}).

Next we consider $\cL S_0(\cD_1)A$.  
Observe that $\cL S_0(\cD_1)A$ is supported in $B(p,5R)$, and that
$$
\begin{aligned}
\norm{\cL \,S_0(\cD_1)A}{2}
& \leq \bigopnorm{\cL \,S_0(\cD_1)}{2} \, \norm{A}{2} \\
& \leq \mu\bigl(B(p,R)\bigr)^{-1/2} \, 
\bigopnorm{\cL \,S_0(\cD_1)}{2} \\
& \leq C\, \mu\bigl(B(p,5R)\bigr)^{-1/2} \, 
\bigopnorm{\cL \,S_0(\cD_1)}{2}.
\end{aligned}
$$
To prove that $\cL\, S_0(\cD_1)$
is bounded on $\ld{M}$, with norm independent
of $R$ in $(0,1]$ observe that, by the spectral theorem and 
Lemma~\ref{l: stima Runo}
$$
\begin{aligned}
\bigopnorm{\cL\, S_0(\cD_1)}{2}
& \leq \sup_{\la \geq 0} \, (\la^2+b) \, \mod{S_0(\la)} \\
& \leq C,
\end{aligned}
$$
where $C$ is independent of $R$.
This concludes the proof of  
~(\ref{f: at size est II}), and of Step~II.  

\emph{Step III:  proof of the second inequality in~(\ref{f: required atomic})}.  
For each $j$ in $\{1,2,3,\ldots\}$, define~$\om_j$ by the formula
\begin{equation}\label{omj}
\om_j(t) = \om (t-j) + \om(t+j) \quant t \in \BR.
\end{equation}
Observe that $\sum_{j=1}^\infty \om_j=1-\om$
and that the support of $\om_j$ is contained in the set of all 
$t$ in $\BR$ such that $j-3/4\le\mod{t}\le j+3/4$.

In the rest of this proof, we write $\Om_{j,N}$ instead of 
$P_N(\cO)(\om_j\, \wh r)$.  
Observe that the support of $\Om_{j,N}$ 
is contained in $\set{t\in\BR:j-3/4\le\mod{t}\le j+3/4}$. 
Moreover, since $\wh{r}(t)={c}^{-1}\, \e^{-c\mod{t}}$ and 
$c>2\beta$ there exist  constants $C,\varepsilon>0$ such that
\begin{equation}\label{propOM}
\norm{\Om_{j,N}}{\infty}
\le C\, \e^{-2\beta j} 
\quant j\in \{1,2,\ldots\}.
\end{equation}

Define the function $T_j: \BR\to \BC$ by
\begin{equation} \label{f:ridef}
T_j(\la) 
= \ir  \Om_{j,N}(t)\, \cJ  (t \la)\wrt t \quant \la \in \BR.
\end{equation}
We may use the observation that $(m-\wh\om\ast m)\wh{\phantom a} 
= \sum_{j=1}^\infty \om_j\, \wh m$ 
and formula~(\ref{f: propertiesRiesz I}), and write
$$
\begin{aligned}
(m-\wh\om\ast m)(\la)
& = \frac{1}{2\pi} \ir  \bigl(1-\om(t)\bigr) \, \wh{r}(t)
      \, \cos (t\la) \wrt t \\ 
& = \sum_{j=1}^\infty T_j(\la). 
\end{aligned}
$$
Then, by the spectral theorem,
$$
\cT  A
=\sum_{j=1}^\infty T_j(\cD_1) A.
$$
Now we estimate $\norm{T_j(\cD_1) A}{2}$.  
By the asymptotics of $J_{N-1/2}$ \cite[formula (5.11.6), p.~122]{L} 
$$
\sup_{s>0} \mod{(1+s)^{N} \, \cJ  (s)} < \infty.
$$      
By Remark \ref{ultra} we may apply 
\cite[Proposition~2.2~\rmi]{MMV1}, since $N-1/2>(n+1)/2$,
and conclude that 
$$
\begin{aligned}
\norm{\cJ (t\cD_1) A}{2}
& \leq \norm{A}{1} \, \bigopnorm{\cJ (t\cD_1)}{1;2} \\
& \leq \sup_{y\in M}  \bignorm{k_{\cJ (t\cD_1)}(\cdot,y)}{2} \\
& \leq C\, \mod{t}^{-n/2}\, \bigl(1+\mod{t}\bigr)^{n/2-\de} 
\quant t \in \BR\setminus \{0\}.
\end{aligned}
$$
for some $\de>0$.
The function
$\cJ (t\cD_1)A$ is supported in $B(p,t+R)$ 
by Remark \ref{r: fps},
and has integral $0$ by Proposition~\ref{p: Mean}~\rmi.
Moreover
\begin{align*}
\norm{T_j(\cD_1)A}{2}
& \leq C\,   \ir  \mod{\Om_{j,N}(t)}  
     \,  \norm{\cJ  (t \cD_1)A}{2}  \wrt t \nonumber \\
& \leq C\, \int_{j-3/4}^{j+3/4} \mod{{\Om_{j,N}(t)}}  \,
         \mod{t}^{-n/2}\, \bigl(1+\mod{t}\bigr)^{n/2-\de} \wrt t \\
& \leq  C \, 
\e^{-2\beta \, j}  \quant j \in \{1,2,\ldots\}. \\
\end{align*}
By (\ref{f: volume growth}) there exist $\vep>0$ such that
$\e^{-2\be \, j}\leq C\, \mu\bigl(B(p,j+1)\bigr)^{-1/2} \,  \e^{-\vep \, j}$. 
Hence
\begin{equation} \label{f: est aj2}
\norm{T_j(\cD_1) A}{2}
\leq  C \,   
    \mu\bigl(B(p,j+1)\bigr)^{-1/2} \,  \e^{-\vep \, j} 
    \quant j \in \{1,2,\ldots\}. 
\end{equation}
Observe that, at least formally, 
$$
\cL \cT  A
=  \sum_{j=1}^\infty \, \cL \, T_j(\cD_1) A.
$$
To prove that the series converges in $\Xhat{M}$ we estimate $\norm{\cL T_j(\cD_1) A}{2}$.  Note that
\begin{equation} \label{f: stima Laj}
\norm{\cL T_j(\cD_1) A}{2}
\leq \norm{\cD_1^2 T_j(\cD_1) A}{2} + \mod{b-\kappa^2} 
\, \norm{T_j(\cD_1) A}{2}.
\end{equation}
We have already estimated $\norm{T_j(\cD_1) A}{2}$ in (\ref{f: est aj2}), 
so we concentrate on $\norm{\cD_1^2 T_j(\cD_1) A}{2}$. 
By (\ref{f:ridef}) and the spectral theorem
$$
\cD_1^2 T_j(\cD_1)
= \ir \Omega_{j,N}(t)\,F(t\cD_1) \frac{\wrt t}{t^2},
$$
where  $F(\la)=\la^2 \cJ(\la)$.
By using (\ref{propOM}), \cite[Proposition~2.2~\rmii]{MMV1}
and the fact that the support of $\Om_{j,N}$ 
is contained in $\set{t: j-3/4\le \mod{t}\le j+3/4}$, we obtain that 
there exist  constants $C$ and $\vep>0$ such~that
$$
\begin{aligned}
\norm{\cD_1^2T_j(\cD_1) A}{2}
& \leq C\,  \ir  \mod{\Om_{j,N}(t)}
     \,  \norm{F (t \cD_1)A}{2}  \frac{\wrt t}{t^2} \\
& \leq C\, \norm{A_{}}{1}\,  \ir \mod{\Om_{j,N}(t)} \, 
     \bigopnorm{F (t \cD_1)}{1;2} \frac{\wrt t}{t^2} \\
& \leq  C \,
 \,  \e^{-2\be \, j} \\
& \leq  C \, 
 \, 
     \mu\bigl(B(p,j+1)\bigr)^{-1/2} \, \e^{-\vep\, j} 
    \quant j \in {\{1,2,\ldots\}}.
\end{aligned}
$$ 
This estimate, (\ref{f: est aj2}) and (\ref{f: stima Laj}) then imply 
that 
\begin{equation} \label{f: Tja}
\norm{\cL \, T_j(\cD_1) A}{2}
\leq C\, 
  \, \mu\bigl(B(p, j+1)\bigr)^{-1/2}\,\e^{-\vep j}.
\end{equation}
Now, by (\ref{f:ridef}) we may write
\begin{equation} 
\begin{aligned}
\cL T_j(\cD_1)A  
& = \cL \ir  \Om_{j,N}(t)\, \cJ  (t \cD_1)A \wrt t \\
& = \cL^{k} \ir  \Om_{j,N}(t)\, \cJ  (t \cD_1)\cL^{1-k}A \wrt t \\
&= \cL^{k} a_j, 
\end{aligned}
\end{equation} 
where $a_j = \ir  \Om_{j,N}(t)\, \cJ  (t \cD_1)\cL^{1-k}A \wrt t$.
\par
The function $a_j$ is supported in $B(p,j+1)$, since $\cL^{1-k}A$
is in $L^2_0\big(B(p,R)\big)$ and the kernel of the operator 
$\int_{-\infty}^\infty  \Om_{j,N}(t)\, \cJ  (t \cD_1) \wrt t$ 
is supported in $\set{(x,y): d(x,y)\le j}$. Moreover, $\int_M a_j \wrt\mu=0$ by Proposition~\ref{p: Mean}~\rmii, and
\begin{equation} \label{f: est int TjA}
\norm{a_j}{2}
\leq C\, \opnorm{\cL^{-k}}{2}
  \, \mu\bigl(B(p, j+1)\bigr)^{-1/2}\,\e^{-\vep j},
\end{equation}
by (\ref{f: Tja}).  Hence $a_j$ is a multiple of an $H^1$-atom supported in $B(p,j+1)$.Then we may apply Lemma~\ref{l: economical decomposition I} to the function
$a_j$, and conclude
that $\cL T_j(\cD_1)A=\cL^k a_j$ is in $\Xhat{M}$, and that,  
by (\ref{f: Tja}),
$$
\begin{aligned}
\norm{\cL T_j(\cD_1)A}{\Xkat}   
&\leq C\, j\,\big(\mu(B(p,j+1)\big)^{1/2} \, \norm{\cL T_j(\cD_1)A}{2}\\
&\leq C\, 
  \, j\, \e^{-\vep j}.
\end{aligned}
$$
By summing over $j$, we see that
$$
\norm{\cL\cT A}{\Xkat}   
\leq C\, 
  \sum_{j=1}^\infty \, j\, \e^{-\vep j},
$$
thereby concluding the proof of Step~III and of the lemma.
\end{proof}

\end{document}